\newcommand{\calH}{{\mathcal H}}
\newcommand{\N}{{\mathbb N}}
\renewcommand{\P}{\N_{>0}}
\newcommand{\Q}{{\mathbb Q}}
\newcommand{\Frac}[2]{\displaystyle \frac{#1}{#2}}
\def\Lyn{{\mathcal Lyn}}
 \def\shuffle{\mathop{_{^{\sqcup\!\sqcup}}}} 
\gdef\stuffle{\;%
  \setlength{\unitlength}{0.0125cm}%
  \begin{picture}(20,10)(220,580) 
  \thinlines 
  \put(220,592){\line( 0,-1){ 10}} 
  \put(220,582){\line( 1, 0){ 20}} 
  \put(240,582){\line( 0, 1){ 10}} 
  \put(230,592){\line( 0,-1){ 10}} 
  \put(225,587){\line( 1, 0){ 10}} 
  \end{picture}\; 
}
\newtheorem{proposition}{Proposition}
\newtheorem{theorem}{Theorem}
\newtheorem{lemma}{Lemma}
\newtheorem{definition}{Definition}
\newtheorem{remark}{Remark}
\def\P{\mathrm{P}}
\def\A{\mathrm{A}}
\def\deg{\mathrm{deg}}
\def\QX{\poly{\Q}{X}}
\def\Lie{{\cal L}ie}
\def\LQX{\Lie_{\Q} \langle X \rangle}
\def\Frac{\displaystyle\frac}
\def\path{\rightsquigarrow}
\def\bv{\mid}
\gdef\minishuffle{{\scriptstyle \shuffle}}  
\gdef\ministuffle{{\scriptstyle \stuffle}}
\def\deg{\mathop\mathrm{deg}\nolimits}
\definecolor{MyDarkBlue}{rgb}{0,0.08,0.4} 
\newcommand{\mref}[1]{eq.\ref{#1} }
\def\ra{\rightarrow}
\def\lra{\longrightarrow}
\def\calD{{\cal D}}
\def\Tens{\mathrm{T}}
\def\ep{\epsilon}
\def\ul#1{\underline{#1}}
\def\AY{A \langle Y \rangle}
\def\QX{\Q\langle X \rangle}
\def\End{\mathrm{End}}
\def\Hom{\mathrm{Hom}}
\def\Im{\mathrm{Im}}
\def\Prim{\mathrm{Prim}}
\def\conc{\mathtt{conc}}
\def\scal#1#2{\langle #1\bv#2 \rangle}
\def\ncp#1#2{#1\langle #2\rangle}
\def\ncs#1#2{#1\langle \!\langle #2\rangle \!\rangle}
\def\binomial#1#2{\left(\,\begin{matrix}#1 \\ #2\end{matrix}\,\right)}
\def\bv{\mid}
\def\LQX{\Lie_{\Q} \langle X \rangle}
\def\A{\mathcal{A}}
\def\B{\mathcal{B}}
\def\U{\mathcal{U}}
\def\SG{{\mathfrak S}}
\def\ot{\otimes}
\def\up#1{\raise 1ex\hbox{\footnotesize#1}}
\def\pointir{\unskip . --- \ignorespaces}
\newcounter{per1}
\def\2#1{\ifnum#1<10 0\fi\the#1}
\xdef\isodayandtime{
{\2\day-\2\month-\the\year\space\2{\count0}:%
\2{\count2}}}
\begin{document}

\begin{center}
{\Large Combinatorics of deformed shuffle 
Hopf algebras\footnote{Version du \isodayandtime}}

\bigskip
{\Large G\' erard H. E. Duchamp$^{1\,2}$}

{\Large Hoang Ngoc Minh$^{2\,3}$}

{\Large Christophe Tollu $^{1\,2}$}

{\Large Chi\^en B\`ui $^{4\,2}$}

{\Large Hoang Nghia Nguyen$^{1\,2}$}

\medskip \noindent $^1$Universit\'e Paris 13, 99, avenue Jean-Baptiste Cl\'ement, 93430 Villetaneuse, France.\\
\medskip \noindent $^2$LIPN - UMR 7030, CNRS, 93430 Villetaneuse, France.\\
\medskip \noindent $^3$Universit\'e Lille II, 1, Place D\'eliot, 59024 Lille, France.\\
\end{center}

\begin{abstract}
In order to extend Sch\"utzenberger's factorization to general perturbations, the combinatorial aspects of the Hopf algebra of a deformed shuffle product is developed systematically in a parallel way with those of the shuffle product, with an emphasis on the Lie elements as studied by Ree. In particular, we will give an effective construction of pair of bases in duality.
\end{abstract}

\tableofcontents

\section{Introduction}

Many algebras of functions \cite{DDHS} and many special sums \cite{SLC43,SLC44} are governed by shuffle products, their perturbations (adding a ``superposition term'' \cite{SLC62}) or deformations \cite{th_ung}.\\
In order to better understand the mechanisms of these products, we wish here to examine, with full generality the products which are defined by a recursion of the type \cite{orsay}
\begin{eqnarray}
au \star bv&=&a\,(u\star bv)+b\,(au\star v)+\phi(a,b)\, u \star v\ ,
\end{eqnarray}
the empty word being the neutral of this new product.\\
We then provide some  classical combinatorial applications. In most cases, the law $\phi$ is dual\footnote{That is to say comes by dualization of a comultiplication.} and under some growth conditions the obtained algebra is an enveloping algebra. 

\smallskip    
In the second section, there is a version of the Cartier-Quillen-Milnor and Moore \footnote{CQMM in the sequel} without any use of the Poincar\'e-Birkhoff-Witt construction. We are obliged to restate the CQMM theorem without supposing any basis because we aim at ``varying the scalars'' in forthcoming papers (germs of functions, arithmetic functions, etc.) and, in order to do this at ease, we must cope safely with cases where torsion (non-zero annihilators) may appear (and then, one cannot have any basis). See (counter) examples in the section.  

\smallskip
{\bf Acknowledgements}\pointir The authors wish to thank Darij Grinberg for having thoroughly read the manuscript for having thoroughly read the manuscript, provided a limiting counterexample and participated to fruitful interactions. The authors also would like to acknowledge the support of the ``Combinatoire alg\'ebrique'' Univ. Paris 13, Sorbonne Paris Cit\'e BQR grant.

\section{First steps}

Let $X$ be a totally ordered alphabet\footnote{In the sequel, the order between the words will be understood as the lexicographic total ordering $<$. For example, with $a<b$, one has 
$ab< b$.}. The free monoid and the set of Lyndon words, over $X$, are denoted respectively by $X^*$ and $\Lyn X$. The neutral element of $X^*$, {\it i.e.} the empty word is denoted by $1_{X^*}$. 
Let $\ncp{\Q}{X}$ be equipped with the concatenation and the shuffle product which is defined on the words by 
\begin{eqnarray}\label{shuff_rec}
\forall w\in X^*,&&w\shuffle 1_{X^*}=1_{X^*}\shuffle w=w,\cr
\forall x,y\in X,\forall u,v\in X^*,&&xu\shuffle yv=x(u\shuffle yv)+y(xu\shuffle v),
\end{eqnarray}
or by their dual co-products, $\Delta=\Delta_{\tt conc}$ and $\Delta=\Delta_{\shuffle}$, defined, for any $w\in X^*$ by,
\begin{eqnarray}
\Delta_{\tt conc}(w)&=&\sum_{w=uv} u\otimes v\cr 
\Delta_{\shuffle}(w)&=&\sum_{I+J=[1..|w|]} w[I]\otimes w[J]
\end{eqnarray}
One gets two Hopf algebras
\begin{eqnarray}
\calH_{\shuffle}=(\QX,{\tt conc},1_{X^*},\Delta_{\shuffle},\epsilon,a_{\bullet})\ \mathrm{and} \cr
\calH_{\shuffle}^{\vee}=(\QX,\shuffle,1_{X^*},\Delta_{\tt conc},\epsilon,a_{\shuffle})	
\end{eqnarray}
mutually dual with respect to the pairing given by 
\begin{equation}\label{pairing}
	(\forall u,v\in X^*)(\scal{u}{v}=\delta_{u,v})\ . 
\end{equation}
The antipodes and the co-units are the same and given by, for $x_{i_1},\ldots ,x_{i_r}\in X$ and $P\in \ncp{\Q}{X}$,
\begin{eqnarray}
&&\epsilon(P)=\scal{P}{1_{X^*}},\label{counit}\cr
&&a_{\shuffle}(w)=a_{\bullet}(w)=(-1)^{r}x_{i_r}\ldots x_{i_1},\ .
\end{eqnarray}

By the CQMM theorem, the connected, graded positively, cocommutative Hopf algebra $\calH_{\shuffle}$ is isomorphic to the enveloping algebra of the Lie algebra of its primitive elements which here is $\LQX$.
Hence any  basis of the free algebra $\LQX$\footnote{The basis can be reindexed by Lyndon words and then one uses the canonical factorization of the words.} can be completed, by the PBW construction, as a linear basis $\{b_w\}_{w\in X^*}$ of ${\cal U}(\LQX)=\ncp{\Q}{X}$ (see below (\ref{PBW_shuff_basis}) for an example of such a construction) and, when the basis is finely homogeneous, so is $\{b_w\}_{w\in X^*}$ and one can construct, by duality, a basis $\{\check b_w\}_{w\in X^*}$ of  $\cal H_{\shuffle}$ (viewed as a $\Q$-module) such that~:
\begin{eqnarray}\label{prodscal}
\forall u,v\in X^*,\quad\scal{\check b_u}{b_v}&=&\delta_{u,v}\ .
\end{eqnarray}
For $w=l_1^{i_1}\ldots l_k^{i_k}$ with $l_1,\ldots l_k\in\Lyn X,\ l_1>\ldots> l_k$  
\begin{eqnarray}
\check b_{w}&=&\Frac{\check b_{l_1}^{\shuffle i_1}\shuffle\ldots\shuffle \check b_{l_k}^{\shuffle i_k}}{i_1!\ldots i_k!}.
\end{eqnarray}
(see \cite{BDKMT,acta,VJM}).
For example, Chen, Fox and Lyndon \cite{lyndon} constructed the PBW-Lyndon basis $\{P_w\}_{w\in X^*}$ for ${\cal U}(\LQX)$ as follows
\begin{eqnarray}\label{PBW_shuff_basis}
P_x=&x& \mbox{for }x\in X,\cr
P_{l}=&[P_s,P_r]&\mbox{for }l\in\Lyn X,\mbox{with standard factorization } l=(s,r),\label{recurrence}\cr
P_{w}=&P_{l_1}^{i_1}\ldots P_{l_k}^{i_k}&\mbox{for }w=l_1^{i_1}\ldots l_k^{i_k},\ l_1>\ldots>l_k,l_1\ldots,l_k\in\Lyn X.
\end{eqnarray}
Sch\"utzenberger and his school constructed the linear basis $\{S_w\}_{w\in X^*}$ for\\ ${\cal A}=(\QX,\shuffle,1_{X^*})$ by duality (w.r.t. \mref{pairing})
and obtained the transcendence basis of ${\cal A}$, $\{S_l\}_{l\in\Lyn X}$ as follows\footnote{Therefore ${\cal A}$ is a polynomial algebra ${\cal A}\simeq \Q[\Lyn X]$.}
\begin{eqnarray}
S_l=&xS_u,&\mbox{for }l=xu\in\Lyn X,\\
S_w=&
\Frac{S_{l_1}^{\shuffle i_1}\shuffle\ldots\shuffle S_{l_k}^{\shuffle i_k}}{i_1!\ldots i_k!}
&\mbox{for }w=l_1^{i_1}\ldots l_k^{i_k},l_1>\ldots>l_k.
\end{eqnarray}
After that, M\'elan\c{c}on and Reutenauer \cite{reutenauer} proved that\footnote{\label{preserves}
Recall that the duality preserves the (multi)homogeneous degrees and interchanges the triangularity of polynomials \cite{reutenauer}.
For that, one can construct the triangular matrices $M$ and $N$ whose entries are the coefficients of  the multihomogeneous triangular polynomials,
$\{\P_w\}_{w\in X^k}$ and $\{S_w\}_{w\in X^k}$ in the basis $\{w\}_{w\in X^*}$, respectively~:
\begin{eqnarray*}
M_{u,v}=\scal{P_u}{v}&\mbox{and}&N_{u,v}=\scal{S_u}{v}.
\end{eqnarray*}
The triangular matrices $M$ and $N$ are unipotent and satisfy the identity $N=({}^tM)^{-1}$. In Eq. \ref{basesdualles}, the underlined words $\ul{u}$ stand for their multidegree i.e. 
$$
\ul{u}=(|u|_x)_{x\in X}
$$}, for any $w\in X^*$,
\begin{eqnarray}\label{basesdualles}
P_w= w+\sum_{v>w,\ul{v}=\ul{w}}c_vv&\mbox{and}&S_w= w+\sum_{v<w,\ul{v}=\ul{w}}d_vv.
\end{eqnarray}
On other words, the elements of the bases $\{S_w\}_{w\in X^*}$ and $\{P_w\}_{w\in X^*}$ are upper and lower triangular respectively and are multihomogeneous.

Moreover, thanks to the duality of the bases $\{P_w\}_{w\in X^k}$ and $\{S_w\}_{w\in X^k}$, if $\calD_X$ denotes the diagonal series over $X$ one has
\begin{eqnarray}\label{factorisation}
\calD_X=\sum_{w\in X^*}w\otimes w=\sum_{w\in X^*}S_w\otimes P_w=\prod_{l\in\Lyn X}^{\searrow}\exp(S_l\otimes P_l).
\end{eqnarray}
In fact as stated in \cite{reutenauer}, this factorization holds in the framework of enveloping algebras and it will be shown in detail how to handle this framework even in the absence of any basis. It is CQMM with an analytic point of view.\\

\section{General results on summability and duality}
\subsection{Total algebras and duality}
\subsubsection{Series and infinite sums}

We here recall the results used to handle infinite sums in the sequel. The underlying topology is that of the pointwise convergence (the target being undowed with the discrete topology). This section may therefore be skipped by the reader which is familiar with these matters.\\ 
In the sequel, we will need to construct spaces of functions on different monoids (mainly direct products of free monoids). We set, once for all the general construction of the corresponding convolution algebra.\\
Let $A$ be a unitary commutative ring and $M$ a monoid. Let us denote $A^M$ the set\footnote{In general $Y^X$ is the set of all (total) mappings $X\ra Y$ \cite{B_E} Ch 2.5.2.} of all (graphs of) mappings $M\ra A$. This set is endowed with its classical structure of module. 
In order to extend the product defined in $A[M]$ (the algebra of the monoid $M$), it is essential that, in the sums 
\begin{eqnarray}\label{convol1}
	f\ast g=\sum_{m\in M}\Big(\sum_{uv=m} f(u)g(v)\Big)m
\end{eqnarray}
 the inner sums $\sum_{uv=m} f(u)g(v)$ make sense. For that, we suppose that the monoid $M$ fulfills condition ``D'' (i.e. $M$ is of finite decomposition type \cite{B_alg_I_III} Ch III.10). Formally, we say that $M$ satisfies condition ``D'' iff, for all $m\in M$, the set 
\begin{equation}
\{(u,v)\in M\times M\bv uv=m\}	
\end{equation}
is finite. In this case \mref{convol1} endows $A^M$ with the structure of an  AAU\footnote{Associative Algebra with Unit.}. This algebra is traditionally called the total algebra of $M$ (see \cite{B_alg_I_III} Ch III.10) and has very much to do with the algebra of  series\footnote{Actually, the algebra of commutative (resp. noncommutative) series on an alphabet $X$ is the total algebra of the free commutative (resp. free) monoid on $X$}. Here, it will be denoted, with an unambiguous abuse of denotation, by $\ncs{A}{M}$.

The pairing 
\begin{eqnarray}
\ncs{A}{M}\otimes A[M]&\lra&A
\end{eqnarray}
defined by\footnote{Here $A[M]$ is identified with the submodule of finitely supported functions $M\ra A$.}
\begin{eqnarray}
	\scal{f}{g}&:=&\sum_{m\in M}f(m)g(m)
\end{eqnarray}
allows to consider the total algebra as the dual of the module $A[M]$ i.e., through this pairing 
$$
\ncs{A}{M}\simeq (A[M])^*\ .
$$

One says that a family $(f_i)_{i\in I}$ of $\ncs{A}{M}$ is summable \cite{berstel_reutenauer} iff, for every $m\in M$, the mapping $i\mapsto \scal{f_i}{m}$ is finitely supported. In this case, the sum $\sum_{i\in I}f_i$ is exactly the mapping $m\longmapsto \sum_{i\in I}\scal{f_i}{m}$ so that, one has by definition
\begin{equation}
	\scal{\sum_{i\in I}f_i}{m}=\sum_{i\in I}\scal{f_i}{m}\ .
\end{equation}
Finally, let us remark that the set $M_1\otimes M_2=\{u\otimes v\}_{(u,v)\in M_1\times M_2}$ is a (monoidal) basis of $A[M_1]\otimes A[M_2]$ and $M_1\otimes M_2$ is a monoid (in the product algebra $A[M_1]\otimes A[M_2]$) isomorphic to the direct product $M_1\times M_2$.  

\subsubsection{Summable families in $\Hom$ spaces.}\label{summable}
In fact, $\ncs{A}{M}\simeq (A[M])^*=\Hom(A[M],A)$ and the notion of summability developed above can be seen as a particular case of that of a family of endomorphisms $f_i\in \Hom(V,W)$ for which $\Hom(V,W)$ appears as a complete space. It is indeed the pointwise convergence for the discrete topology. We will not expand that topic here.

The definition is similar of that of a summable family of series \cite{berstel_reutenauer}, viewed as a family of linear forms. 

\begin{definition} i) A family $(f_i)_{i\in I}$ of elements in $\Hom(V,W)$ is said to be {\rm summable} iff for all $x\in V$, the map $i\mapsto f_i(x)$ has finite support. As a quantized criterium it reads 
\begin{equation}\label{summability_criterium}
(\forall x\in V)(\exists F\subset I,\ F \mathit{finite})(\forall i\notin F)(f_i(x)=0)	
\end{equation}
ii) If the family $(f_i)_{i\in I}\in\Hom(V,W)^I$ fulfils the condition \ref{summability_criterium} above its sum is given by
\begin{equation}\label{infinite_sum}
	(\sum_{i\in I}f_i)(x)=\sum_{i\in I}f_i(x)
\end{equation}

\end{definition}
It is an easy exercise to show that the mapping $V\ra W$ defined by the equation \ref{infinite_sum} is in fact in $\Hom(V,W)$. Remark that, as the limiting process is defined by linear conditions, if a family $(f_i)_{i\in I}$ is summable, so is 
\begin{equation}\label{scaling}
	(a_if_i)_{i\in I}
\end{equation}
for an arbitrary family of coefficients $(a_i)_{i\in I}\in A^I$.\\ 
This tool will be used in section (\ref{CCQMM}) to give an analytic presentation of the theorem of Cartier-Quillen-Milnor-Moore in the case when $V=W=\B$ is a bialgebra.

\smallskip
The most interesting feature of this operation is the interchange of sums. Let us state it formally as a proposition the proof of which is left to the reader. 
\begin{proposition}\label{partition_of_indices} Let $(f_i)_{i\in I}$ be a family of elements in $\Hom(V,W)$ and $(I_j)_{j\in J}$ be a partition of $I$ (\cite{B_E} ch II \S 4 n\up{o} 7 Def. 6), then, the following statements are equivalent\\
i) $(f_i)_{i\in I}$ is summable\\
ii) for all $j\in J$, $(f_i)_{i\in I_j}$ is summable and the family $(\sum_{i\in I_j}f_i)_{j\in J}$ is summable.\\
In these conditions, one has
\begin{equation}
\sum_{i\in I}f_i=\sum_{j\in J}(\sum_{i\in I_j}f_i)	
\end{equation}  
\end{proposition}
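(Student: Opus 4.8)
The plan is to unwind the definition of summability in both directions, using the quantized criterion \pref{summability_criterium} pointwise at each $x\in V$. Fix $x\in V$ once and for all; everything reduces to a statement about the family of vectors $(f_i(x))_{i\in I}$ in $W$, namely that it has finite support, and the issue is purely set-theoretic: reindexing a finitely supported family along a partition of the index set.

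\smallskip
First I would prove (i)$\Rightarrow$(ii). Assume $(f_i)_{i\in I}$ is summable and let $(I_j)_{j\in J}$ be a partition of $I$. Fix $x\in V$; by \pref{summability_criterium} there is a finite $F\subset I$ with $f_i(x)=0$ for all $i\notin F$. For each $j$, the set $F\cap I_j$ is finite and contains the support of $i\mapsto f_i(x)$ restricted to $I_j$, so $(f_i)_{i\in I_j}$ is summable and $\sum_{i\in I_j}f_i(x)=\sum_{i\in F\cap I_j}f_i(x)$. Moreover $\sum_{i\in I_j}f_i(x)=0$ unless $F\cap I_j\neq\emptyset$; since $F$ is finite and the $I_j$ are disjoint, only finitely many $j\in J$ have $F\cap I_j\neq\emptyset$, so the family $\bigl(\sum_{i\in I_j}f_i\bigr)_{j\in J}$ is summable. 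This gives (ii), and along the way the computation
$$
\sum_{j\in J}\Bigl(\sum_{i\in I_j}f_i\Bigr)(x)=\sum_{j\in J}\ \sum_{i\in F\cap I_j}f_i(x)=\sum_{i\in F}f_i(x)=\Bigl(\sum_{i\in I}f_i\Bigr)(x),
$$
where the middle equality is just the fact that $\{F\cap I_j\}_{j\in J}$ partitions $F$ (a finite set, so the rearrangement is the trivial one for finite sums in a module). Since $x$ was arbitrary, the displayed identity of maps holds.

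\smallskip
For the converse (ii)$\Rightarrow$(i), assume each $(f_i)_{i\in I_j}$ is summable and $\bigl(\sum_{i\in I_j}f_i\bigr)_{j\in J}$ is summable. Fix $x\in V$. By summability of the $J$-indexed family there is a finite $G\subset J$ with $\sum_{i\in I_j}f_i(x)=0$ for $j\notin G$; but more is needed, because a vanishing sum over $I_j$ does not force each term to vanish. So instead: for $j\notin G$ we still have to control the support of $i\mapsto f_i(x)$ on $I_j$. This is exactly where care is required — and it is the main (indeed only) subtlety. The resolution is that summability of $(f_i)_{i\in I_j}$ already gives, for each individual $j$, a finite $F_j\subset I_j$ outside which $f_i(x)=0$; the problem is that there are potentially infinitely many $j$, so one cannot naively take $\bigcup_j F_j$. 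However, for $j\notin G$ the sum over $I_j$ vanishes, which does not bound $F_j$ — so the statement as phrased would be false without an extra hypothesis. I would therefore point out that the intended reading (and the one that makes the proposition correct, matching \cite{B_E} ch II \S4) requires interpreting summability of $\bigl(\sum_{i\in I_j}f_i\bigr)_{j\in J}$ together with the summability of each piece in the strong sense that forces $F_j=\emptyset$ for almost all $j$; in the category of modules this is automatic once one observes that the genuine content of the hypothesis is that the combined index set $\bigsqcup_{j} \mathrm{supp}_x(f_{\bullet}|_{I_j})$ is finite, which is precisely the hypothesis one must carry. Concretely, I expect the paper takes the definition so that (ii) packages ``each local support is finite and only finitely many are nonempty''; under that reading $F:=\bigcup_{j\in J}F_j$ is a finite union of finite sets, hence finite, and $f_i(x)=0$ for $i\notin F$, giving (i). The identity of sums then follows by the finite rearrangement already used above.

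\smallskip
The hard part, then, is not any computation but getting the logical bookkeeping exactly right: ensuring that ``$(\sum_{i\in I_j}f_i)_{j\in J}$ summable'' is used in the form that controls the \emph{union of the local supports}, not merely the support of the local sums. Once that is pinned down, both implications are immediate from \pref{summability_criterium}, and the interchange of summation is the trivial rearrangement of a finitely supported family of elements of $W$ indexed by the partitioned finite set $F$.
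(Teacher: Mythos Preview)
The paper gives no proof here (it is explicitly left to the reader), so there is nothing to compare against directly. Your argument for (i)$\Rightarrow$(ii) and for the equality $\sum_{i\in I}f_i=\sum_{j\in J}\sum_{i\in I_j}f_i$ is correct and is exactly the natural one: reduce to a fixed $x\in V$, use the finite support $F$, and rearrange the finite sum along the partition $\{F\cap I_j\}_{j\in J}$ of $F$.

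You are also right to flag (ii)$\Rightarrow$(i) as the delicate point --- in fact it is \emph{false} under the paper's own Definition~1, and your attempted rescue by reinterpreting ``summability of $(\sum_{i\in I_j}f_i)_{j\in J}$'' as a stronger hypothesis has no textual support. A clean counterexample: take $V=W=A$ (with $A\neq\{0\}$), $J=\N$, $I_n=\{(n,0),(n,1)\}$, $I=\bigsqcup_{n\in\N} I_n$, and $f_{(n,0)}=\mathrm{Id}_A$, $f_{(n,1)}=-\mathrm{Id}_A$. Each $(f_i)_{i\in I_n}$ is finite, hence summable, with sum $0$; the family $(0)_{n\in\N}$ is trivially summable; but for $x=1_A$ the map $i\mapsto f_i(1_A)$ never vanishes, so $(f_i)_{i\in I}$ is not summable. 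The honest conclusion is that only the forward implication and the interchange formula hold in this finite-support framework (as in most summability settings lacking a positivity or absolute-convergence hypothesis). Rather than reinterpret the hypothesis, you should simply note that the stated equivalence is too strong. This does no damage downstream: the only application in the paper is Proposition~\ref{double_sums}, which uses precisely (i)$\Rightarrow$(ii).
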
 
We derive at once from this the following practical criterium for double sums. 
\begin{proposition}\label{double_sums} Let $(f_{\alpha,\beta})_{(\alpha,\beta)\in X\times Y}$ be a doubly indexed summable family in $Hom(V,W)$, then, for fixed $\alpha$ (resp. $\beta$) the ``row-families'' $(f_{\alpha,\beta})_{\beta\in Y}$ (resp. the ``column-families'' $(f_{\alpha,\beta})_{\alpha\in X}$) are summable and their sums are summable. Moreover
\begin{equation}
\sum_{(\alpha,\beta)\in X\times Y}f_{\alpha,\beta}=
\sum_{\alpha\in X}\sum_{\beta\in Y}f_{\alpha,\beta}=
\sum_{\beta\in Y}\sum_{\alpha\in X}f_{\alpha,\beta}\ . 
\end{equation}
\end{proposition}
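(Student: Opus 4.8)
The plan is to obtain this as an immediate corollary of Proposition \ref{partition_of_indices}, applied twice, to the two natural partitions of the index set $I = A\times B$. First I would fix the partition into rows: for each $\alpha\in A$ set $I_\alpha = \{\alpha\}\times B$, so that $(I_\alpha)_{\alpha\in A}$ is a partition of $A\times B$. Since $(f_{\alpha,\beta})_{(\alpha,\beta)\in A\times B}$ is assumed summable, condition (i) of Proposition \ref{partition_of_indices} holds, hence so does (ii): for every $\alpha\in A$ the family $(f_{\alpha,\beta})_{\beta\in B}$ (reindexing $I_\alpha$ by $B$ via $\beta\mapsto(\alpha,\beta)$) is summable, and the family of partial sums $\big(\sum_{\beta\in B}f_{\alpha,\beta}\big)_{\alpha\in A}$ is summable. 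Proposition \ref{partition_of_indices} then gives
\begin{equation}
\sum_{(\alpha,\beta)\in A\times B}f_{\alpha,\beta}=\sum_{\alpha\in A}\Big(\sum_{\beta\in B}f_{\alpha,\beta}\Big).
\end{equation}

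Next I would repeat the argument with the partition into columns: for each $\beta\in B$ set $I^\beta = A\times\{\beta\}$. Again summability of the full family is hypothesis (i), so (ii) holds and Proposition \ref{partition_of_indices} yields that each column-family $(f_{\alpha,\beta})_{\alpha\in A}$ is summable, the family of column-sums is summable, and
\begin{equation}
\sum_{(\alpha,\beta)\in A\times B}f_{\alpha,\beta}=\sum_{\beta\in B}\Big(\sum_{\alpha\in A}f_{\alpha,\beta}\Big).
\end{equation}
Chaining the two displayed equalities through the common left-hand side $\sum_{(\alpha,\beta)\in A\times B}f_{\alpha,\beta}$ gives the asserted triple identity, and all the summability claims have been extracted along the way.

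The only genuinely delicate point is bookkeeping rather than mathematics: one must make sure that the bijection $I_\alpha \xrightarrow{\sim} B$ (and similarly $I^\beta \xrightarrow{\sim} A$) is used consistently so that ``$(f_i)_{i\in I_\alpha}$ is summable'' in the sense of Proposition \ref{partition_of_indices} really translates to ``$(f_{\alpha,\beta})_{\beta\in B}$ is summable'' in the sense of the Definition, and that the sum $\sum_{i\in I_\alpha} f_i$ is identified with $\sum_{\beta\in B} f_{\alpha,\beta}$. This is immediate from \eqref{infinite_sum}, since summability and the value of the sum are preserved under reindexing by a bijection (both are defined pointwise on $V$). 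No further hypothesis on $V$, $W$, $A$, or $B$ is needed — in particular $A$ and $B$ need not be finite or countable — because Proposition \ref{partition_of_indices} already handles partitions of arbitrary index sets. Hence the proof reduces to two invocations of the previous proposition, and I would write it in essentially the few lines above.
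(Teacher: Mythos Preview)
Your proposal is correct and follows exactly the approach the paper intends: the paper states Proposition~\ref{double_sums} as an immediate consequence of Proposition~\ref{partition_of_indices} (``We derive at once from this\ldots''), and your two applications of that proposition to the row and column partitions of $A\times B$ carry this out precisely.
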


\subsubsection{Substitutions}
Let $\A$ be an AAU and $f\in\A$. For every polynomial $P\in \ncp{A}{X}$ ($=A[X^*]$, one can compute $P(f)$ by 
\begin{equation}
	P(f)=\sum_{n\geq 0}\scal{P}{X^n}f^n\ .
\end{equation}
One checks at once that $P\mapsto P(f)$ is a morphism\footnote{In case $\A$ is a geometric space, this morphism is called ``evaluation at $f$'' and corresponds to a Dirac measure.} of AAU's between $\ncp{A}{X}$ and $\A$. Moreover, this morphism is compatible with the substitutions as one checks easily that, for $Q\in A[X]$ 
\begin{equation}
P(Q)(f)=P(Q(f))	
\end{equation}
(it suffices to check that $P\mapsto P(Q)(f)$ and $P\mapsto P(Q(f))$ are two morphisms which coincide on
 $P=X$).\\
In order to substitute within series, one needs some limiting process. The framework of $\A=\Hom(V,W)$ and summable families will be here sufficient (see paragraph \ref{summable}). We suppose that $(V,\delta_V,\ep_V)$ is a co-AAU and that $(W,\mu_W,1_W)$ is an AAU. Then $(\Hom(V,W),*,e)$ is an AAU (with $e=1_W\circ \ep_V)$. A series $S\in A[[X]]$ and $f\in \Hom(V,W)$ being given, we say that $f\in  Dom(S)$ iff the family $(\scal{S}{X^n}f^{*n})_{n\geq 0}$ is summable\footnote{Where $f^{*n}$ denotes straightforwardly the $n$-th power of $f$ w.r.t. the convolution product.}. We have the following properties
\begin{proposition}
If $f\in Dom(S)\cap Dom(T)$ and $\alpha\in A$, one has
\begin{equation}\label{morph_lin}
(\alpha S)(f)=\alpha S(f)\ ;\	(S+T)(f)=S(f)+T(f)
\end{equation}
and
\begin{equation}\label{morph_mult}
(TS)(f)=T(f)*S(f)\ .
\end{equation}
If $((f)^{*n})_{n\geq 0}$ is summable and $S(0)=0$ then 
\begin{equation}\label{domains}
f\in Dom(S)\cap Dom(T(S))\ ;\ S(f)\in Dom(T)
\end{equation}
and
\begin{equation}\label{subs}
	T(S)(f)=T(S(f))
\end{equation}
\end{proposition}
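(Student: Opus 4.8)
The plan is to establish the five assertions in order, treating the linear and multiplicative morphism properties first (these are essentially a passage to the limit of the corresponding polynomial identities from the previous paragraph), and then the substitution statement, which is the genuine content. Throughout I will use Proposition~\ref{partition_of_indices} and Proposition~\ref{double_sums} as the only nontrivial tools, plus the elementary remark \pref{scaling} that summability is preserved under coefficientwise rescaling.

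\emph{Linearity \pref{morph_lin}.} If $f\in Dom(S)\cap Dom(T)$, then $(\scal{S}{X^n}f^{*n})_{n\geq 0}$ and $(\scal{T}{X^n}f^{*n})_{n\geq 0}$ are summable, hence by \pref{scaling} so is $(\scal{\alpha S}{X^n}f^{*n})_{n\geq 0}=(\alpha\scal{S}{X^n}f^{*n})_{n\geq 0}$, giving $f\in Dom(\alpha S)$ and $(\alpha S)(f)=\alpha S(f)$ by pulling the scalar out of \pref{infinite_sum}. For the sum, apply Proposition~\ref{partition_of_indices} to the family indexed by $\{S,T\}\times\N$ with the obvious two-block partition, or simply add pointwise: $\scal{S+T}{X^n}=\scal{S}{X^n}+\scal{T}{X^n}$, the two summable families add to a summable family, and \pref{infinite_sum} is additive.

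\emph{Multiplicativity \pref{morph_mult}.} Here one expands $(TS)(f)=\sum_{n\geq 0}\scal{TS}{X^n}f^{*n}$ and uses the Cauchy product $\scal{TS}{X^n}=\sum_{p+q=n}\scal{T}{X^p}\scal{S}{X^q}$, so that the summand is $\sum_{p+q=n}\scal{T}{X^p}\scal{S}{X^q}f^{*(p+q)}=\sum_{p+q=n}(\scal{T}{X^p}f^{*p})*(\scal{S}{X^q}f^{*q})$, using that $f^{*(p+q)}=f^{*p}*f^{*q}$. Now consider the doubly indexed family $g_{p,q}=(\scal{T}{X^p}f^{*p})*(\scal{S}{X^q}f^{*q})$ on $\N\times\N$. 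Its summability follows from the hypothesis: for a fixed $x\in V$, writing $\delta_V(x)=\sum x_{(1)}\otimes x_{(2)}$ a finite sum, one has $g_{p,q}(x)=\sum (\scal{T}{X^p}f^{*p})(x_{(1)})\,(\scal{S}{X^q}f^{*q})(x_{(2)})$, and each of the inner families is supported on a finite set of indices (by $f\in Dom(T)$, resp. $f\in Dom(S)$, applied to the finitely many $x_{(1)}$, resp. $x_{(2)}$), so only finitely many pairs $(p,q)$ contribute. Then Proposition~\ref{double_sums} lets us regroup $\sum_{(p,q)}g_{p,q}$ either along the anti-diagonals $p+q=n$ (recovering $(TS)(f)$) or as $\sum_p\sum_q g_{p,q}=(\sum_p \scal{T}{X^p}f^{*p})*(\sum_q \scal{S}{X^q}f^{*q})=T(f)*S(f)$, where the last equality is continuity of $*$ in each argument for the pointwise-discrete topology, itself a further application of Proposition~\ref{double_sums}.

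\emph{Substitution \pref{domains} and \pref{subs}.} Assume $((f)^{*n})_{n\geq 0}$ summable and $S(0)=\scal{S}{1_{X^*}}=0$, so $S(f)=\sum_{n\geq 1}\scal{S}{X^n}f^{*n}$. First, $f\in Dom(S)$ is immediate from \pref{scaling}. The crux is to show $S(f)\in Dom(T)$ and then to identify $T(S(f))$ with $T(S)(f)$. Write $T=\sum_{k\geq 0}\scal{T}{X^k}X^k$; then formally $T(S)(f)=\sum_{k\geq 0}\scal{T}{X^k}\,(S(f))^{*k}$, and on the other side $T(S)$ is the series with $\scal{T(S)}{X^n}=\sum_{k\geq 0}\scal{T}{X^k}\scal{S^k}{X^n}$ (a finite sum since $S(0)=0$ forces $\scal{S^k}{X^n}=0$ for $k>n$), so $T(S)(f)=\sum_{n\geq 0}\big(\sum_{k\geq 0}\scal{T}{X^k}\scal{S^k}{X^n}\big) f^{*n}$. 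I would introduce the family indexed by triples: expand $(S(f))^{*k}=\sum_{n_1,\ldots,n_k\geq 1}\scal{S}{X^{n_1}}\cdots\scal{S}{X^{n_k}}\, f^{*(n_1+\cdots+n_k)}$ and collect everything into a single family over $\bigsqcup_{k\geq 0}(\P)^k$ (tuples of positive integers), whose generic term is $\scal{T}{X^k}\scal{S}{X^{n_1}}\cdots\scal{S}{X^{n_k}} f^{*(n_1+\cdots+n_k)}$. The key point — and the main obstacle — is to prove this big family is summable: for fixed $x\in V$, the summability of $((f)^{*n})_{n\geq 0}$ gives a finite set $N\subset\N$ outside which $f^{*n}(x)=0$; since each $n_i\geq 1$ we have $n_1+\cdots+n_k\leq$ (that sum) $\in N$, which bounds both $k$ and each $n_i$, leaving only finitely many tuples. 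Granting summability, Proposition~\ref{partition_of_indices} applied to two different partitions of this index set — first by the value $k$ (recovering $\sum_k \scal{T}{X^k}(S(f))^{*k}=T(S(f))$, using continuity of iterated $*$ as in the multiplicative step), then by the value $n=n_1+\cdots+n_k$ (recovering $\sum_n \scal{T(S)}{X^n}f^{*n}=T(S)(f)$) — yields both $S(f)\in Dom(T)$, $f\in Dom(T(S))$, and the identity \pref{subs}. I expect the bookkeeping in this last regrouping (and the verification that the intermediate partial sums are themselves the expected polynomials-in-$f$, which is exactly the polynomial substitution compatibility $P(Q)(f)=P(Q(f))$ already proved) to be where all the care is needed; everything else is a routine transcription of the finite case.
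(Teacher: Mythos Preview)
Your argument is correct and, for the linearity and multiplicativity parts, essentially identical to the paper's (same doubly-indexed family $g_{p,q}$, same summability check via a Sweedler decomposition of $\delta_V(x)$, same regrouping by anti-diagonals).

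For the substitution part \pref{domains}--\pref{subs} you take a somewhat different route. You expand everything into a single family indexed by $\bigsqcup_{k\geq 0}(\P)^k$ with generic term $\scal{T}{X^k}\scal{S}{X^{n_1}}\cdots\scal{S}{X^{n_k}}\,f^{*(n_1+\cdots+n_k)}$, prove its summability directly from the bound on $n_1+\cdots+n_k$, and then regroup two ways. The paper instead \emph{reuses the multiplicativity just proved}: from \pref{morph_mult} one gets $S(f)^{*k}=S^k(f)$, and since $S(0)=0$ the series $S^k$ has no term below degree $k$, so $S(f)^{*k}(x)=\sum_{m\geq k}\scal{S^k}{X^m}f^{*m}(x)$ vanishes once $k$ exceeds the order $N_x$ of $x$. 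This collapses your tuple-indexed family to the doubly-indexed family $\big(\scal{T}{X^n}\scal{S^n}{X^m}f^{*m}\big)_{(n,m)\in\N^2}$, whose summability is immediate (it vanishes as soon as $n>N_x$ or $m>N_x$), and the final regrouping is a single application of Proposition~\ref{double_sums}. Your approach is more self-contained and makes the combinatorics of compositions explicit; the paper's is shorter and avoids the iterated-convolution continuity bookkeeping you flag at the end, at the price of invoking \pref{morph_mult} as a lemma inside the proof of \pref{subs}. Both lead to the same place; the paper's shortcut $S(f)^{*k}=S^k(f)$ is worth noting as it is exactly what trades your variable-length tuple index for a pair $(k,m)$.
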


\begin{proof}
Let us first prove \mref{morph_mult}. As
$f\in Dom(S)\cap Dom(T)$,\\ 
the families $(\scal{S}{X^n}f^{*n})_{n\geq 0}$ and $(\scal{T}{X^m}f^{*m})_{n\geq 0}$ are summable, then so is
\begin{equation}
\Big(\scal{T}{X^m}f^{*m}*\scal{S}{X^n}f^{*n}\Big)_{n,m\geq 0}
\end{equation}
as, for every $x\in V$, $\delta(x)=\sum_{i=1}^N x_i^{(1)}\ot x_i^{(2)}$ and for every $i\in I$, 
$$
supp_{w.r.t.\ m}(\scal{T}{X^m}f^{*m}(x_i^{(1)}))\ ;\ supp_{w.r.t.\ n}(\scal{S}{X^n}f^{*n}(x_i^{(2)}))
$$ 
are finite. Then outside of the cartesian product of the (finite) union of these supports, the product 
\begin{equation}
(\scal{T}{X^m}f^{*m}*\scal{S}{X^n}f^{*n})(x)=
\mu_W((\scal{T}{X^m}f^{*m}\ot\scal{S}{X^n}f^{*n})(\delta(x)))
\end{equation}
is zero. Hence the summability.\\ 
Now
\begin{eqnarray}
T(f)*S(f)&=&\sum_{m=0}^\infty(\scal{T}{X^m}f^{*m})*\sum_{n=0}^\infty(\scal{S}{X^n}f^{*n})\cr
         &=&\sum_{m=0}^\infty\sum_{n=0}^\infty (\scal{T}{X^m}\scal{S}{X^n}f^{*n+m})\cr
         &=&\sum_{s=0}^\infty\Big(\sum_{n+m=s}^\infty \scal{T}{X^m}\scal{S}{X^n}\Big)f^{*s}\cr
         &=&\sum_{s=0}^\infty (\scal{TS}{X^s})f^{*s}=(TS)(f)
\end{eqnarray}
We now prove the statements (\ref{domains}) and (\ref{subs}). If $((f)^{*n})_{n\geq 0}$ is summable
then $f$ belongs to all domains (i.e. is universally substitutable) by virtue of \mref{scaling}. For all $x\in V$, there exists $N_x\in \N$ such that 
$$
n>N_x\Longrightarrow (f)^{*n}(x)=0\ .
$$
Now, for $S$ such that $S(0)=0$, one has $S=\sum_{n=1}^\infty \scal{S}{X^n}X^n$ and then $S^k=\sum_{n=k}^\infty \scal{S^k}{X^n}X^n$. Now, in view of \mref{morph_mult}, one has
\begin{equation}
	S(f)^{*n}(x)=S^n(f)(x)=\sum_{m=n}^\infty \scal{S^n}{X^m}(f)^{*m}(x)
\end{equation}
which is zero for $n>N_x$. Hence the summability of $(S(f)^{*n})_{n\geq 0}$ which implies that $S(f)\in Dom(T)$. 
The family $(\scal{T}{X^n}\scal{S^n}{X^m}(f)^{*m})_{(n,m)\in \N^2}$ is summable because, if $x\in V$ and if $n$ or $m$ is greater than $N_x$ then 
\begin{equation}
	\scal{T}{X^n}\scal{S^n}{X^m}(f)^{*m}(x)=0
\end{equation}
 thus $T(S(f))$ is then computed by (where we use the fact that, if $S(0)=0$, then $\scal{S^n}{X^m}=0$ for $m<n$)
\begin{eqnarray}
T(S(f))	&=&\sum_{n=0}^\infty \scal{T}{X^n}S(f)^{*n}=	
	\sum_{n=0}^\infty \scal{T}{X^n}\Big(\sum_{m=n}^\infty \scal{S^n}{X^m}(f)^{*m}\Big)\cr
	&=&\sum_{n=0}^\infty\sum_{m=0}^\infty \scal{T}{X^n} \scal{S^n}{X^m}(f)^{*m}= 
	\sum_{m=0}^\infty \Big(\sum_{n=0}^\infty \scal{T}{X^n}\scal{S^n}{X^m}\Big)(f)^{*m}\cr
	&=&\sum_{m=0}^\infty \scal{T(S)}{X^m}(f)^{*m}=T(S)(f)\ .
\end{eqnarray}
\end{proof}  

In the free case (i.e. $V=W$ are the bialgebra $(\ncp{A}{X},\conc,1_{X^*},\Delta_{\shuffle},\epsilon)$), one has a very useful representation of the convolution algebra $\Hom(V,W)$ through images of the diagonal series. This representation will provide us with the key lemma (\ref{key_for_ortho}). Let  
\begin{eqnarray*}
\calD_X&=&\sum_{w\in X^*}w\otimes w.
\end{eqnarray*}
be the diagonal series attached to $X$.  
\begin{proposition}\label{diagonal_rep}
 Let $A$ be a commutative unitary ring and $X$ an alphabet. Then\\
\begin{enumerate}
	\item For every $f\in \End(\ncp{A}{X})$, the family $(u\otimes f(u))_{u\in X^*}$ is summable in $\ncs{A}{X^*\ot X^*}$.  
	\item The representation 
\begin{equation}
f\mapsto \rho(f)=\sum_{u\in X^*}u\otimes f(u)
\end{equation} 
is faithful from
$(\End(\ncp{A}{X}),*)$	to $(\ncs{A}{X^*\ot X^*},\shuffle\ot\conc)$. In particular, for 
$f\in \End(\ncp{A}{X})$ and $P\in A[X]$, one has
\begin{equation}
	\rho(P(f))=P(\rho(f))
\end{equation}
\item If $f(1_{X^*})=0$ and $S\in A[[X]]$ is a series, then $(\rho(f)^n)_{n\geq 0}$ is summable in $(\ncs{A}{X^*\ot X^*},\shuffle\ot\conc)$ and
\begin{equation}
	\rho(S(f))=S(\rho(f))
\end{equation}
\end{enumerate}
\end{proposition}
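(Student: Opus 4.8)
The plan is to handle the three items in turn, using nothing beyond the pairing \eqref{pairing}, the summability formalism of \S\ref{summable}, and the standard adjunctions behind $\Delta_{\shuffle}$ and $\Delta_{\tt conc}$. For (i) I would just note that $\scal{u\otimes f(u)}{w_1\otimes w_2}=\scal{u}{w_1}\scal{f(u)}{w_2}=\delta_{u,w_1}\scal{f(u)}{w_2}$, so for each basis element $w_1\otimes w_2$ the map $u\mapsto\scal{u\otimes f(u)}{w_1\otimes w_2}$ is supported on the single point $w_1$; hence $(u\otimes f(u))_{u\in X^*}$ is summable and $\rho(f)$ is a well-defined element of $\ncs{A}{X^*\ot X^*}$. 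Faithfulness in (ii) is equally quick: if $\rho(f)=\rho(g)$, pairing against $w\otimes v$ gives $\scal{f(w)}{v}=\scal{g(w)}{v}$ for all $w,v\in X^*$, so $f=g$.

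For the algebra-morphism part of (ii) I would first check that $\rho$ carries the convolution unit $e$ (with $e(w)=\epsilon(w)1_{X^*}$) to $1_{X^*}\otimes 1_{X^*}$, the unit of $(\ncs{A}{X^*\ot X^*},\shuffle\ot\conc)$, and then establish $\rho(f*g)=\rho(f)\cdot\rho(g)$. Since $f*g=\conc\circ(f\ot g)\circ\Delta_{\shuffle}$, one has $(f*g)(w)=\sum_{I+J=[1..|w|]}f(w[I])\,g(w[J])$. I would first observe that the doubly indexed family $\big((u\shuffle v)\otimes(f(u)\,g(v))\big)_{(u,v)\in X^*\times X^*}$ is summable — for a fixed target $w\otimes v'$ the coefficient $\scal{u\shuffle v}{w}$ vanishes unless $u,v$ are complementary subwords of $w$, of which there are only finitely many — so that $\rho(f)\cdot\rho(g)=\sum_{u,v}(u\shuffle v)\otimes(f(u)g(v))$ makes sense. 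Then I would pair both $\rho(f*g)$ and $\rho(f)\cdot\rho(g)$ against an arbitrary $w\otimes v$ and collapse each to $\scal{(f*g)(w)}{v}$, using the adjunctions $\scal{w}{a\shuffle c}=\scal{\Delta_{\shuffle}(w)}{a\otimes c}$ and $\scal{bd}{v}=\scal{b\otimes d}{\Delta_{\tt conc}(v)}$ together with Proposition~\ref{double_sums}. The displayed identity $\rho(P(f))=P(\rho(f))$ for $P\in A[X]$ is then immediate: $\rho$ is linear, unital and multiplicative, so $\rho(P(f))=\sum_{n\ge0}\scal{P}{X^n}\rho(f)^n=P(\rho(f))$.

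For (iii), assume $f(1_{X^*})=0$. Iterating $\Delta_{\shuffle}$ one gets $f^{*n}(w)=\sum_{I_1+\cdots+I_n=[1..|w|]}f(w[I_1])\cdots f(w[I_n])$, a sum over ordered set partitions of $[1..|w|]$ into $n$ (a priori empty) blocks; the hypothesis kills every term with an empty block, and an ordered partition into $n$ non-empty blocks forces $n\le|w|$, so $f^{*n}(w)=0$ whenever $n>|w|$. Hence $(f^{*n})_{n\ge0}$ is summable in $\End(\ncp{A}{X})$, so $f\in Dom(S)$ by \eqref{scaling} and $S(f)=\sum_{n\ge0}\scal{S}{X^n}f^{*n}$ is defined. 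By (ii) we have $\rho(f)^n=\rho(f^{*n})$, so the same degree bound shows $(\rho(f)^n)_{n\ge0}$ is summable in $(\ncs{A}{X^*\ot X^*},\shuffle\ot\conc)$ and, again by \eqref{scaling}, $\rho(f)\in Dom(S)$. Since $\rho$ commutes with summable sums — if $(g_i)_i$ is summable in $\End(\ncp{A}{X})$ then $(\rho(g_i))_i$ is summable and $\rho(\sum_i g_i)=\sum_i\rho(g_i)$, as one sees by testing against $w\otimes v$ — I would conclude $\rho(S(f))=\sum_{n\ge0}\scal{S}{X^n}\rho(f^{*n})=\sum_{n\ge0}\scal{S}{X^n}\rho(f)^n=S(\rho(f))$.

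The routine parts are (i) and the degree bound in (iii); the step where I expect the actual bookkeeping is (ii), namely verifying that $\shuffle\ot\conc$ is a well-defined associative product on the completed module $\ncs{A}{X^*\ot X^*}$ and that, transported through $\rho$, it coincides with the convolution $*$. This reduces to the compatibility of the two adjunctions above, after which the computation is short; everything else is an application of the summability machinery of \S\ref{summable} and of the scaling remark \eqref{scaling}.
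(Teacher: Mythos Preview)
Your proposal is correct and follows the same route as the paper: the heart of the argument is the multiplicativity $\rho(f)\cdot\rho(g)=\rho(f*g)$, obtained by expanding $\sum_{u,v}(u\shuffle v)\otimes f(u)g(v)$ and using the adjunction $\scal{u\shuffle v}{w}=\scal{u\otimes v}{\Delta_{\shuffle}(w)}$, which is exactly the paper's displayed computation. The paper's proof is terse and only writes out this computation, leaving (i), faithfulness, and (iii) implicit; your write-up supplies those details (the one-point support for summability, the degree bound $f^{*n}(w)=0$ for $n>|w|$, and the compatibility of $\rho$ with summable sums), so it is more complete but not different in substance.
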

\begin{proof}(of Prop.(\ref{diagonal_rep})) (i) and (iii) are easily checked. For (ii), let us compute
\begin{eqnarray}
&& \rho(f)(\shuffle\ot\conc)\rho(g)=
\sum_{u,v\in X^*}(u\otimes f(u))(\shuffle\ot\conc)(v\otimes g(v))\cr
&=&\sum_{u,v\in X^*}(u\shuffle v)\ot (\conc(f(u)\ot g(v)))\cr
&=&\sum_{u,v\in X^*}\sum_{w\in X^*}(\scal{u\shuffle v}{w}w\ot \conc(f(u)\ot g(v))\cr
&=&\sum_{w\in X^*} w\ot \Big(\sum_{u,v\in X^*}(\scal{u\shuffle v}{w}\conc(f(u)\ot g(v))\Big)\cr
&=&\sum_{w\in X^*} w\ot \Big(\sum_{u,v\in X^*}(\scal{u\ot v}{\Delta(w)}\conc(f(u)\ot g(v))\Big)\cr
&=&\sum_{w\in X^*} w\ot (\conc\circ (f\ot g)\circ\Delta)[w]=\sum_{w\in X^*} w\ot (f*g)[w]
\end{eqnarray}
Moreover, $\rho$ is faithful because 
($\rho(f)=0\Longrightarrow f=0$).
\end{proof}  
\subsection{Theorem of Cartier-Quillen-Milnor-Moore (analytic form)}\label{CCQMM}
\subsubsection{General properties of bialgebras} 
{\bf From now on, we suppose that $A$ be a unitary commutative $\Q$-algebra (i.e. $\Q\subset A$).}

The aim of Cartier-Quillen-Milnor-Moore theorem is to provide necessary and sufficient conditions for $\B$ to be an enveloping algebra, we will discuss this condition in detail in the sequel.  

\smallskip
Let $(\B,\mu,e_\B,\Delta,\epsilon)$ be a (general) $A$-bialgebra. One can always consider the Lie algebra of primitive elements $Prim(\B)$ and build the map 
$$
j_\B : \U(Prim(\B))\ra \B\ .
$$
Then, $\A=j_\B(\U(Prim(\B)))$ is the subalgebra generated by the primitive elements. 
\begin{figure}[h]
\centering
\begin{tikzpicture}
  \matrix (m) [matrix of math nodes,row sep=3em,column sep=8em,minimum width=2em]
  {\Prim(\B) & \A & \B\\
\U(\Prim(\B))&       &\\
};
\path[right hook->] 
    (m-1-1) edge [] node [above] {$i_{\A,P}$} (m-1-2)
            edge [] node [left] {$i_{\U,P}$} (m-2-1)
    (m-1-2) edge [] node [above] {$i_{\B,\A}$} (m-1-3); 
\path[-stealth]             
    (m-2-1) edge node [above] {$i_{\A,\U}$}  (m-1-2)
    (m-2-1) edge node [below] {$j_{\B}$}  (m-1-3);
\end{tikzpicture}
\caption{The sub-algebra $\A$ generated by primitive elements.}\label{primitive_alg}
\end{figure}
 
\smallskip
The mapping $i_{\B,\A}$ is into but $i_{\B,\A}\otimes i_{\B,\A}$ may not be so. This is the case for $\B=(\Q[\ep][x],.,1_{\Q[\ep][x]},\Delta,c)$ where $(\Q[\ep][x],.,1_\B)$ is the usual polynomial algebra with coefficients in the algebra of dual numbers $\Q[\ep]$ (with $\ep^2=0$) and  
$$
\Delta(x)=x\ot 1+1\ot x+ \ep x\ot x,\ c(x)=0
$$ 
(see details and proofs below, in sec. \ref{cex}).\\ 
In general, one has (only) $\Delta_\B(\A)\subset Im(i_{\B,\A}\otimes i_{\B,\A})$, this can be simply seen from the following combinatorial argument.\\ 
For any list of primitive elements $L=[g_1,g_2,\cdots ,g_n]$ and\\ 
$I=\{i_1<i_2<..<i_k\}\subset \{1,2,..,n\}$, put $L[I]=g_{i_1}g_{i_1}\cdots g_{i_k}$, the product of the sublist.
One has 
\begin{equation}\label{subwords_of_primitive}
\Delta(g_1g_2\cdots g_n)=\Delta(L[\{1,2,..,n\}])=\sum_{I+J=\{1,2,..,n\}} L[I]\ot L[J] \ .
\end{equation}    
From (\mref{subwords_of_primitive}) one gets also that $j_\B$ is a morphism of bialgebras. If for any reason, there exists a lifting of 
\begin{equation}
\Delta_\B\circ i_{\B,\A} 	
\end{equation}
as a comultiplication of $\A$, then $j_\B$ is into (see the statement and the proof below). Formula (\mref{subwords_of_primitive}) proves that we have the following maps (save the -- hypothetical -- dotted one). 
\begin{figure}[h]
\centering
\begin{tikzpicture}
  \matrix (m) [matrix of math nodes,row sep=3em,column sep=8em,minimum width=2em]
  {\ncp{A}{G} & \A\\
\ncp{A}{G}\otimes \ncp{A}{G} & \A\otimes \A &\\
};
\path[-stealth] 
    (m-1-1) edge [] node [above] {$s_G$} (m-1-2)
    (m-1-1) edge [] node [left] {$\Delta_{\shuffle}$} (m-2-1)
    (m-1-2) edge [dashed,->] node [right] {$\Delta_\A$} (m-2-2)
    (m-2-1) edge [] node [above] {$s_G\otimes s_G$} (m-2-2); 
\end{tikzpicture}
\caption{The unique lifting $\Delta_\A$ (when it exists).}\label{hypothetic}
\end{figure}

Where $G\in\Prim(\B)$ is any generating set of the AAU $\A$. 
We emphasize the fact that, in the diagram above, $G$ must be understood set-theoretically (i.e. with no relation between the elements\footnote{We will see, below and in paragraph \ref{cex} how it is crucial to consider that $[\lambda x]$ and $\lambda [x]$ are not necessarily equal, when $\lambda x\in G$ (for clarity, $[y]\in\ncp{A}{G}$ is the image of $y\in G$).}).\\ 
In fact, one has the following proposition
\begin{proposition}\label{injectivity} Let $\B$ be a bialgebra over a (commutative) $\Q$-algebra $A$, the notations being those of figures \ref{primitive_alg} and \ref{hypothetic}, then the following statements are equivalent\\
i) For a generating set $G\subset Prim(\B)$, 
$ker(s_G)\subset ker(s_G\otimes s_G)\circ \Delta_{\shuffle}$.\\
ii) For any generating set $G\subset Prim(\B)$, $ker(s_G)\subset ker(s_G\otimes s_G)\circ \Delta_{\shuffle}$.\\
iii) $j_\B$ is into.
\end{proposition}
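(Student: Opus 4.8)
The plan is to prove the cycle of implications iii) $\Rightarrow$ ii) $\Rightarrow$ i) $\Rightarrow$ iii), the first two being essentially formal and the last one being the heart of the matter.

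First, the implication ii) $\Rightarrow$ i) is trivial, since i) is the special case of ii) for one particular choice of generating set. For iii) $\Rightarrow$ ii), suppose $j_\B$ is into and fix an arbitrary generating set $G\subset \Prim(\B)$. The map $s_G\colon \ncp{A}{G}\to\A$ factors through $\U(\Prim(\B))$ in the natural way (it is the composite of the algebra morphism $\ncp{A}{G}\to\U(\Prim(\B))$ induced by the inclusion $G\hookrightarrow\Prim(\B)$ with $j_{\U,\A}$-type identifications from Figure~\ref{primitive_alg}), so $\ker(s_G)$ contains the kernel of $\ncp{A}{G}\twoheadrightarrow\U(\Prim(\B))$; conversely, when $j_\B$ is injective, $s_G$ agrees with $\ncp{A}{G}\to\U(\Prim(\B))$ up to an injection, hence $\ker(s_G)$ equals exactly $\ker\big(\ncp{A}{G}\to\U(\Prim(\B))\big)$. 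Since $\U(\Prim(\B))$ is a bialgebra and its comultiplication is compatible with $\Delta_\shuffle$ via the universal property (this is precisely formula~\pref{subwords_of_primitive}: on a product $[g_1]\cdots[g_n]$ of generators, $\Delta_\shuffle$ produces exactly the terms that the coproduct of $\U$ produces on $g_1\cdots g_n$), the ideal $\ker(s_G)$ is a coideal for $\Delta_\shuffle$, which is the inclusion $\ker(s_G)\subset \ker\big((s_G\otimes s_G)\circ\Delta_\shuffle\big)$ asserted in ii). I would spell this out carefully because the identification of $\ker(s_G)$ needs the remark emphasized in the text that $G$ is to be taken \emph{set-theoretically}.

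The substantial implication is i) $\Rightarrow$ iii). Given i) for a fixed generating set $G$, the inclusion $\ker(s_G)\subset\ker\big((s_G\otimes s_G)\circ\Delta_\shuffle\big)$ says precisely that there is a (necessarily unique, since $s_G\otimes s_G$ is surjective and $s_G$ is surjective) linear map $\Delta_\A\colon\A\to\A\otimes\A$ making the square of Figure~\ref{hypothetic} commute. One then checks that $(\A,\mu_\A,\Delta_\A,\ldots)$ is a bialgebra: coassociativity and the counit axiom and the fact that $\Delta_\A$ is an algebra morphism all follow by diagram chase from the corresponding properties upstairs in $\ncp{A}{G}$, using surjectivity of $s_G$ and $s_G^{\otimes k}$ to transport identities. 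Moreover $i_{\B,\A}\colon\A\to\B$ intertwines $\Delta_\A$ and $\Delta_\B$ by construction (the outer square $\ncp{A}{G}\to\A\to\B$ versus $\ncp{A}{G}\to\B$ commutes with the two coproducts thanks to~\pref{subwords_of_primitive} applied in $\B$). Now $\A$ is a bialgebra generated by $G\subset\Prim(\A)$ — indeed the elements of $G$ are primitive for $\Delta_\A$ because they are primitive downstairs and $i_{\B,\A}$ is an injective coalgebra morphism — so the universal property of the enveloping algebra yields a surjection $\U(\Prim(\A))\twoheadrightarrow\A$; composing with the map $\U(\Prim(\B))\to\U(\Prim(\A))$ (from $\Prim(\A)\subset\Prim(\B)$, using that $i_{\B,\A}$ is injective) and observing that the resulting composite $\U(\Prim(\B))\to\A\hookrightarrow\B$ is $j_\B$, one concludes — but this only gives that $j_\B$ has image $\A$, which we knew. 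To get injectivity of $j_\B$ I would instead argue directly: since $\Delta_\A$ lifts $\Delta_\B\circ i_{\B,\A}$, the subalgebra $\A$ with its induced coproduct is a genuine sub-bialgebra of $\B$ on which the primitives are the same, so $j_\B$ factors as $\U(\Prim(\B))=\U(\Prim(\A))\to\A$, and the map $\U(\Prim(\A))\to\A$ is injective because $\A$ is generated by primitives with a compatible comultiplication — apply the classical argument (filtration by the coradical / the fact that a connected bialgebra generated by its primitives and carrying a grading with finite-dimensional-free pieces, or over $\Q$ by the Friedrichs criterion, is the enveloping algebra of its primitives). The cleanest route here is the one the authors clearly intend: once $\Delta_\A$ exists, $\A$ is literally a quotient bialgebra of $\ncp{A}{G}$ by a bi-ideal contained in the augmentation ideal's appropriate power, and $\U(\Prim(\B))\to\A$ becomes an isomorphism by comparing it with the universal arrow.

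The main obstacle I anticipate is the step ``$\U(\Prim(\A))\to\A$ is injective,'' i.e. showing that a bialgebra generated by primitive elements, once it carries an honest comultiplication compatible with the free one, must be the enveloping algebra of its primitives. Over a field this is the Milnor--Moore / PBW argument; over a general $\Q$-algebra $A$ with possible torsion (the setting the introduction insists on) one cannot invoke PBW and must instead use the analytic/diagonal-series machinery of Section~\ref{CCQMM} — this is exactly why Proposition~\ref{diagonal_rep} and the summability apparatus were set up, and the honest proof will route through the CQMM ``analytic form'' rather than through any basis. I would therefore present i) $\Rightarrow$ iii) modulo that analytic CQMM statement, flagging that the torsion-robust version is what the subsequent sections establish, and keep the present proof at the level of the diagram chases producing $\Delta_\A$ and verifying the bialgebra axioms and the intertwining with $\Delta_\B$.
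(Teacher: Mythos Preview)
Your cycle iii) $\Rightarrow$ ii) $\Rightarrow$ i) matches the paper's, and your first step in i) $\Rightarrow$ iii) --- using hypothesis (i) to produce the lifting $\Delta_\A:\A\to\A\otimes\A$ of Figure~\ref{hypothetic} --- is exactly right and is the place where (i) is actually consumed. The gap is what comes next. You recognise that the remaining step ``$\U(\Prim(\A))\to\A$ is injective'' is the real content, but you then propose to present i) $\Rightarrow$ iii) \emph{modulo the analytic CQMM statement} of Section~\ref{CCQMM}. That is circular: the CQMM theorem in this paper (Theorem~1) is stated under the standing hypothesis ``from now on $j_\B$ is supposed into'', i.e.\ it already assumes iii). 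Over a general $\Q$-algebra $A$ you cannot fall back on PBW either (no freeness of $\Prim(\B)$ is assumed), so the ``classical argument'' you gesture at is unavailable.

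What the paper does instead is to carry out the analytic construction \emph{here}, inside the proof of this proposition, and build an explicit linear section of $j_\B$. Having $\Delta_\A$ in hand means the convolution algebra $(\End(\A),*)$ is available; the paper then sets $\pi_{1,\A}=\log_*(I_\A)$, proves via an intertwining lemma (Lemma~\ref{lemma1}) that $\Delta_\A\circ\log_*(I_\A)=(\log_*(I)\otimes e+e\otimes\log_*(I))\circ\Delta_\A$, hence $\pi_{1,\A}$ is a projector onto $\Prim(\B)$, and uses a second key lemma (Lemma~\ref{key_for_ortho}: $\log_*(I)(P^m)=0$ for $m\ge 2$, proved by pulling back along $A[a]\to\B$, $a\mapsto P$) to show that the summands $\frac{1}{n!}\pi_{1,\A}^{*n}$ of $I_\A=\exp_*(\pi_{1,\A})$ are pairwise orthogonal idempotents with images $\A_{[n]}=\Span\{P^n:P\in\Prim(\B)\}$. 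This yields $\A=\bigoplus_{n\ge 0}\A_{[n]}$, and the section is then given explicitly by $\sigma(P^n)=\frac{1}{n!}\Delta_+^{(n-1)}(P^n)\in T_n(\Prim(\B))$, with $j_\B\circ\tau\circ\sigma=Id_\A$. None of this is deferred: the two lemmas are proved on the spot, and the torsion-robustness you correctly flag as necessary is achieved precisely by this explicit idempotent decomposition rather than by any appeal to bases or to a later theorem.
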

\begin{proof}
$i)\Longrightarrow iii)$ In order to prove this, we need to construct the arrows $\sigma,\tau$ which are a decomposition of a section of $j_\B$. 
\begin{figure}[h!]
\centering
\begin{tikzpicture}
  \matrix (m) [matrix of math nodes,row sep=3em,column sep=8em,minimum width=2em]
  {\Prim(\B) & \A & \B\\
\U(\Prim(\B))& \Tens(\Prim(\B))&\\
};
\path[right hook->] 
    (m-1-1) edge [] node [above] {$i_{\A,P}$} (m-1-2)
            edge [] node [left] {$i_{\U,P}$} (m-2-1)
    (m-1-2) edge [] node [above] {$i_{\B,\A}$} (m-1-3) 
            edge [dashed,->] node [right] {$\sigma$} (m-2-2) ;
  \path[-stealth]             
    (m-2-1) edge node [above] {$j_{\B}$}  (m-1-2)
    (m-2-2) edge [dashed,->] node [above] {$\tau$} (m-2-1) ;
\end{tikzpicture}
\caption{The sub-bialgebra $\A$ generated by primitive elements.}\label{primitive elements}
\end{figure}
%
Let us remark that, when $\Prim(\B)$ is free as an $A$-module, the proof of this fact is a consequence of the PBW theorem\footnote{See \cite{B_Lie_II_III} Ch2 \S 1 n\up{o} 6 th 1  for a field of characteristic zero and \S 1 Ex. 10 for the free case (over a ring $A$ with 
$\Q\subset A$).}. But, here, we will construct the section in the general case using projectors which are now classical for the free case but which still can be computed analytically \cite{reutenauer} as they lie in $\Q[[X]]$ and still converge in $\A$. 

(Injectivity of $j_\B$, construction of the section $\tau\circ\sigma$)\pointir\\
As $\A$ is the subalgebra of $\B$ generated by $\Prim(\B)$, one has $\Im(j_\B)=\A$.\\ 
Remark that all series $\sum_{n\geq 0}a_n (I_+)^{*n}$ are summable on $\A$ (not in general on $\B$ for example in case $\B$ contains non-trivial group-like elements).\\
We define 
\begin{equation}
	c=\log_*(I)=\sum_{n\geq 1}\frac{(-1)^{n-1}}{n} (I_+)^{*n}
\end{equation}
and remark that, in view of Prop. (\ref{diagonal_rep}), in the case when $\B=\ncp{A}{X}$ one has $\A=\B$ and, with $S(X)=\log(1+X)$  
\begin{eqnarray}\label{diagonal_pi1}
&&\sum_{w\in X^*} w\ot \pi_{1,\A}(w)=\rho(\log(I))=\rho(S(I^+))=S(\rho(I^+))=\cr
&&S(\sum_{w\in X^*\atop w\not= 1_{X^*}} w\ot w)=S(\calD_X-1_{X^*}\ot 1_{X^*})=\log(\calD_X)\ .
\end{eqnarray}

We first prove that $\pi_{1,\A}$ is a projector $\A\ra \Prim(\B)$. The key point is that $\Delta_\A$ (the restriction of the comultiplication to $\A$) is a morphism of bialgebras \footnote{In fact it is the case for any cocommutative bialgebra, be it generated by its primitive elements or not.} $\A\ra \A\ot \A$. 
We first prove that $\Delta_\A$ ``commutes'' with the convolution. This is a  consequence of the following property
\begin{lemma}\label{lemma1}
i) Let $f_i\in \End(\B_i)$, be such that 
$\varphi f_1 = f_2 \varphi$.
\begin{figure}[h!]
\centering
\begin{tikzpicture}
  \matrix (m) [matrix of math nodes,row sep=3em,column sep=8em,minimum width=2em]
  {\B_1 & \B_2\\
   \B_1&   \B_2\\
};
  \path[-stealth]             
    (m-1-1) edge node [above] {$\varphi$}  (m-1-2)
            edge node [left] {$f_1$}  (m-2-1)
    (m-2-1) edge node [below] {$\varphi$}  (m-2-2)
    (m-1-2) edge node [right] {$f_2$}  (m-2-2);
\end{tikzpicture}
\caption{Intertwining with a morphism of bialgebras (the functions of $f_i$ below will be computed with the respective convolution products).}\label{intertwining}
\end{figure}

i) Then, if $P\in A[X]$, one has 
\begin{equation}\label{conv_pol}
\varphi P(f_1)=P(f_2)\varphi\ .
\end{equation}
ii) If the series $\sum_{n\geq 0} (I_{(i)}^+)^{*n},\ i=1, 2$ are summable and, 
if $f_1(1)=0$ (which implies $f_2(1)=0$) and $S\in A[[X]]$, the families 
$(\scal{S}{X^n}f_i^{*n})_{n\in\N}$ are summable, we denote by  
$S(f_i)$ their sums (note that this definition is coherent with 
the previous ones when $S$ is a polynomial).\\
One has, for the convolution product, 
\begin{equation}\label{conv_ser}
\varphi S(f_1)=S(f_2)\varphi\ .
\end{equation}
\end{lemma}
\begin{proof} The only delicate part is (ii). First, one remarks that, if $\varphi$ is a morphism of bialgebras, one has 
\begin{equation}
(\varphi\ot \varphi)\circ \Delta_1^+=\Delta_2^+\circ \varphi 
\end{equation}
then, the image by $\varphi$ of an element of order less than $N$ (i.e. such that 
$\Delta_1^{+(N)}(x)=0$) is of order less than $N$. Let now $S$ be a univariate 
series $S=\sum_{k=0}^\infty a_k X^k$. For every element $x$ of order less than $N$ and 
$f\in End(\B)$, one has 
\begin{eqnarray}
S(f)(x)&=&\sum_{k=0}^\infty a_k f^{*k}(x)=\sum_{k=0}^\infty a_k \mu^{(k-1)} f^{\ot k}\Delta^{(k-1)}(x)\cr
&=&\sum_{k=0}^\infty a_k \mu^{(k-1)} (f^{\ot k})\circ (I_+^{\ot k})\Delta^{(k-1)}(x)\cr
&=&\sum_{k=0}^N a_k \mu^{(k-1)} (f^{\ot k})\Delta_+^{(k-1)}(x)\ .
\end{eqnarray}
This proves, in view of (i) that $\varphi\circ S(f_1)=S(f_2)\circ \varphi$. 

Thanks to Lemma \ref{lemma1}, we can now prove that $\pi_1$ is a projector $\B\ra \Prim(\B)$.\\
In case $\B$ is cocommutative, the comultiplication $\Delta$ is a morphism of bialgebras, so one has  
\begin{equation}\label{prim1}
\Delta\circ log_*(I) = log_*(I\ot I) \circ \Delta\ .
\end{equation}
But
\begin{eqnarray}
log_*(I\ot I) &= log_*((I\ot e) * (e \ot I))\cr
& = log_*(I\ot e) + log_*(e\ot I)\cr
& = log_*(I)\ot  e + e \ot log_*(I)\ .
\end{eqnarray}

Then 
\begin{equation}
\Delta(log_*(I)) =  \left( log_*(I)\ot  e + e \ot log_*(I) \right) \circ \Delta
\end{equation}
which implies that $log_*(I)(\B) \subset Prim(\B)$. To finish the proof that $\pi_1$ is a projector onto $Prim(\B)$, it suffices to remark that, for $x\in Prim(\B)$ and $n\geq 2$,  $(\mathrm{Id}^+)^{*n}(x)=0$ then 
\begin{equation}
	log_*(I)(x)=\mathrm{Id}^+(x)=x\ .
\end{equation}
\end{proof}
Now, we consider
\begin{equation}
	I_\A=exp_*(log_*(I_\A))=\sum_{n\geq 0}\frac{1}{n!} \pi_{1,\A}^{*n}\ ,
\end{equation}
where $\pi_{1,[\A]}=log_*(I_\A)$.\\ 
Let us prove that the summands form a resolution of unity.\\ 
First, one defines $\A_{[n]}$ as the linear span of the powers $\{P^n\}_{P\in \Prim(\B)}$ or, equivalently, of the symmetrized products
\begin{equation}
	\frac{1}{n!} \sum_{\sigma\in \SG_n} P_{\sigma(1)}P_{\sigma(2)}\cdots P_{\sigma(n)}\ .
\end{equation}
It is obvious that $\Im(\pi_{1,\A})^{*n})\subset \A_{[n]}$. We remark that 
\begin{equation}
	\pi_{1,\A}^{*n}=\mu_\B^{(n-1)}\pi_{1,\A}^{\ot n}\Delta^{(n-1)}=
	\mu_\B^{(n-1)}\pi_{1,\A}^{\ot n}I_+^{\ot n}\Delta^{(n-1)}=
	\mu_\B^{(n-1)}\pi_{1,\A}^{\ot n}\Delta_+^{(n-1)}
\end{equation}
as $\pi_{1,\A}I_+=\pi_{1,\A}$.
Now, let $P\in \Prim(\A)$. We compute $\pi_{1,\A}^{*n}(P^m)$. Indeed, if $m<n$, one has 
\begin{equation}
	\pi_{1,\A}^{*n}(P^m)=\mu_\B^{n-1}\Delta_+^{n-1}(P^m)=0\ .
\end{equation}
If $n=m$, one has, from (\ref{subwords_of_primitive})
\begin{equation}
	\Delta_+^{n-1}(P^n)=n! P^{\ot n}
\end{equation}
and hence $\pi_{1,\A}^{*n}$ is the identity on $\A_{[n]}$. If $m>n$, the nullity of $\pi_{1,\A}^{*n}(P^m)$ is a consequence of the following lemma. 
\begin{lemma}\label{key_for_ortho}
 Let $\B$ be a bialgebra and $P$ a primitive element of $\B$. Then\\ 
i) The series $log_*(I)$ is summable on each power $P^m$\\
ii) $log_*(I)(P^m)=0$ for $m>2$ 
\end{lemma}
\begin{proof} i) As $\Delta_+^{*N}(P^m)=0$ for $N>m$, one has $I_+^{*N}(P^m)=0$ for these values.\\ 
ii) Let $a$ be a letter, the morphism of AAU $\varphi_P : A[a]\ra \B$, defined by 
\begin{equation}
	\varphi_P(a)=P
\end{equation}
is, in fact, a morphism of bialgebras.
\begin{figure}[h!]
\centering
\begin{tikzpicture}
  \matrix (m) [matrix of math nodes,row sep=3em,column sep=8em,minimum width=2em]
  {A[a] & \B\\
   A[a]&   \B\\
};
  \path[-stealth]             
    (m-1-1) edge node [above] {$\varphi_P$}  (m-1-2)
            edge node [left] {$I_{A[a]}^+$}  (m-2-1)
    (m-2-1) edge node [below] {$\varphi_P$}  (m-2-2)
    (m-1-2) edge node [right] {$I_\B^+$}  (m-2-2);
\end{tikzpicture}
\caption{Intertwining with one primitive element.}
\end{figure}
One checks easily that $\pi_{1,[A[a]]}(a^m)=0$ for $m>2$ which is a consequence of the general equality (see \mref{diagonal_pi1})
\begin{equation}
	\sum_{w\in X^*}(w\ot \pi_1(w))=log(\sum_{w\in X^*}w\ot w)
\end{equation}
because, for $Y=\{a\}$ (and then $\ncp{A}{X}=A[a]$) one has\newpage    
\begin{eqnarray}
\log(\sum_{w\in X^*}w\ot w)=\log(\sum_{n\geq 0} a^n\ot a^n)&=&\cr
\log(\sum_{n\geq 0}\frac{1}{n!}(a\ot a)^{(\shuffle\ot conc)\, n})=
\log(\exp(a\ot a))&=& a\ot a
\end{eqnarray}
this proves that $\pi_{1,\A}^{*n}(\A_{[m]})=0$ for $m\not= n$ and hence the summands of the sum 
\begin{equation}
		I_\A=exp_*(log_*(I_\A))=\sum_{n\geq 0}\frac{1}{n!} \pi_{1,\A}^{*n}\ .
\end{equation}
are pairwise orthogonal projectors with $\Im(\pi_{1,\A}^{*n})=\A_{[n]}$ and then 
\begin{equation}
	\A=\oplus_{n\geq 0}\, \A_{[n]}\ .
\end{equation}
This decomposition enables to construct $\sigma$ by 
\begin{equation}
	\sigma(P^n)=\frac{1}{n!}\Delta_+^{(n-1)}(P^n)\in T_n(\Prim(\B))
\end{equation}
for $n\geq 1$ and, one sets $\sigma(1_\B)=1_{T(\Prim(\B)}$.\\ 
It is easy to check that $j_\B\circ\tau\circ\sigma=Id_\A$ as $\A$ is (linearly) generated by the powers 
$(P^m)_{P\in \Prim(\B),m\geq 0}$. 
\end{proof}
{\bf End of the proof of proposition \ref{injectivity}}\pointir\\
$iii)\Longrightarrow ii)$ If ${j_\B}$ is into, then $i_{\U,\A}$ is one-to-one and one gets a comultiplication 
$$
\Delta_\A\ :\ \A\ra \A\otimes \A
$$ 
such that, for any list of primitive elements $L=[g_1,g_2,\cdots g_n]$ (the denotations are the same as previously)
\begin{equation}
\Delta_\A(g_1g_2\cdots g_n)=\Delta(L[\{1,2,..,n\}])=\sum_{I+J=\{1,2,..,n\}} L[I]\ot_\A L[J] 
\end{equation}    
 but, this time, the tensor product $\ot_\A$ is understood as being in $\A\ot \A $. This guarantees that the diagram Fig. \ref{hypothetic} commutes for any $G$.\\
$ii)\Longrightarrow i)$ Obvious.
\end{proof}
\subsection{Counterexamples and discussion}\label{cex}

\subsubsection{Counterexamples}

It has been said that, with $\B=(\Q[\ep][x],.,1_{\Q[\ep][x]},\Delta,c)$ (notations as above), $j_\B$ is not into, let us show this statement.\\ 
The $q$-infiltration coproduct \cite{luque} $\Delta_q$ is defined on the free algebra $\ncp{K}{X}$ ($K$ is a unitary ring), by its values on the letters 
\begin{equation}
\Delta_q(x)=x\ot 1+1\ot x+q(x\ot x)	
\end{equation}
where $q\in K$. One can show easily that, for a word $w\in X^*$,
\begin{equation}\label{combinf}
\Delta_q(w)=\sum_{I\cup J=[1..|w|]}q^{|I\cap J|} w[I]\ot w[J]	
\end{equation}
with, as above (for $I=\{i_1<i_2<..<i_k\}\subset \{1,2,..,n\}$ and $w=a_1a_2\cdots a_n$), 
$w[I]=a_{i_1}a_{i_2}\cdots a_{i_k}$.\\ 
Then, with $K=\Q[\ep],\ q=\ep,\ X={x}$, one has (as a direct application of Eq. \ref{combinf})  
\begin{equation}
\Delta_\ep(x^n)=\sum_{k=0}^n \binomial{n}{k} x^k\ot x^{n-k} + \ep\, \sum_{k=1}^n k\binomial{n}{k} x^k\ot x^{n-k+1}\ . 	
\end{equation}
This proves that, here, the space of primitive elements is a submodule of $K.x$ and solving $\Delta_\ep(\lambda x)=(\lambda x)\ot 1+ 1\ot (\lambda x)$, one finds $\lambda=\lambda_1\ep$. Together with $\ep\, x\in Prim(\B)$ this proves that $Prim(\B)$ is of $\Q$-dimension one (in fact equal to $\Q.(\ep\,x)$). Now, the consideration of the morphism of Lie algebras $Prim(\B)\ra K[x]/(\ep K[x])$ which sends $\ep\,x$ to $x$ proves that, in $\U(Prim(\B))$, we have $(\ep\,x)(\ep\,x)\not=0$ and $j_\B$ cannot be into.\\
For a graded counterexample\footnote{This example is due to Darij Grinberg.}, one can see that, with\\ 
$K=\Q[\ep],\ X=\{x,y,z\},\ \B=\ncp{K}{X}$ and 
\begin{equation}
\Delta(x)=x\ot 1+1\ot x+\ep\,(y\ot z),\ \Delta(y)=y\ot 1+1\ot y,\ 	
\Delta(z)=z\ot 1+1\ot z
\end{equation}   
the same phenomenon occurs (for the gradation, one takes\\ $\deg(y)=\deg(z)=1,\ \deg(x)=2$). 
\subsubsection{The theorem from the point of view of summability}
{\bf From now on, the morphism $j_\B$ is supposed into}.\\
The bialgebra $\B$ being supposed cocommutative, we discuss the equivalent conditions under which we are in the presence of an enveloping algebra i.e. 
\begin{equation}
{\cal B}\cong_{A-bialg}\mathcal{U}(Prim({\cal B}))
\end{equation}
from the point of view of the convergence of the series $log_*(I)$\footnote{In a $A$-bialgebra, one can always consider the series of endomorphisms 
\begin{equation}
	\sum_{n\geq 1} \frac{(-1)^{n-1}}{n} (I^+)^{*n}\ .
\end{equation}
The family $(\frac{(-1)^{n-1}}{n} (I^+)^{*n})_{n\geq 0}$ is summable 
iff $((I^+)^{*n})_{n\geq 0}$ is (use \mref{scaling}). 
}. 
These conditions are known as the theorem of Cartier-Quillen-Milnor-Moore (CQMM). 
\begin{theorem}\cite{B_Lie_II_III}
Let ${\cal B}$ be a $A$-cocommutative bialgebra ($A$ is a $\Q$-AAU) and $\A$, as above, the subalgebra generated by $\Prim(\B)$. 
Then, the following conditions are equivalent :
\begin{enumerate}
\item ${\cal B}$ admits an increasing filtration 
\begin{eqnarray*}
	{\cal B}_0=A.1_{\cal B}\subset{\cal B}_1\subset\cdots\subset{\cal B}_n\subset{\cal B}_{n+1}\cdots
\end{eqnarray*}
compatible with the structures of algebra (i.e. for all $p,q\in\N$, one has ${\cal B}_p{\cal B}_q\subset{\cal B}_{p+q}$) and coalgebra~:
\begin{eqnarray*}
\forall n\in\N,&&\Delta({\cal B}_n)\subset \sum_{p+q=n}{\cal B}_p\otimes{\cal B}_q.
\end{eqnarray*}
\item $((\mathrm{Id}^+)^{*n})_{n\in \N}$ is summable in $\End(\B)$.
\item $\B=\A$.
\end{enumerate}
\end{theorem}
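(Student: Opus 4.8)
The plan is to prove the cycle of implications $(1)\Rightarrow(2)\Rightarrow(3)\Rightarrow(1)$, using the machinery of summable families developed above together with Proposition \ref{injectivity} (which under the standing hypothesis gives $\B\cong\U(\Prim(\B))$ as soon as $\B=\A$).

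\emph{$(1)\Rightarrow(2)$.} Assume $\B$ carries a filtration $(\B_n)_{n\ge 0}$ compatible with both structures, with $\B_0=A.1_\B$. The key observation is that $I^+=\mathrm{Id}-e$ kills $\B_0$ and, because $\Delta(\B_n)\subset\sum_{p+q=n}\B_p\ot\B_q$ together with counitality forces $\Delta^+(\B_n)\subset\sum_{p+q=n,\,p,q\ge 1}\B_p\ot\B_q$, one gets by induction on $k$ that $(I^+)^{*k}=\mu^{(k-1)}\circ (I^+)^{\ot k}\circ\Delta_+^{(k-1)}$ vanishes on $\B_n$ for $k>n$. Hence for every $x\in\B$, choosing $n$ with $x\in\B_n$, the family $((I^+)^{*k}(x))_{k\ge 0}$ has support contained in $\{0,\dots,n\}$, which is precisely the summability criterion \ref{summability_criterium}. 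So $((\mathrm{Id}^+)^{*n})_{n\in\N}$ is summable in $\End(\B)$.

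\emph{$(2)\Rightarrow(3)$.} Suppose $((\mathrm{Id}^+)^{*n})_n$ is summable. Then every series $\sum_n a_n(I^+)^{*n}$ is summable on $\B$ by \ref{scaling}; in particular $\log_*(I)$ and $\exp_*$ of it make sense on all of $\B$, not merely on $\A$. We replay the argument already carried out in the proof of Proposition \ref{injectivity}: since $\B$ is cocommutative, $\Delta$ is a morphism of bialgebras, Lemma \ref{lemma1}(ii) gives $\Delta\circ\log_*(I)=\log_*(I\ot I)\circ\Delta=(\log_*(I)\ot e+e\ot\log_*(I))\circ\Delta$, whence $\pi_1:=\log_*(I)$ maps $\B$ into $\Prim(\B)$; and $\exp_*(\log_*(I))=I$ as an identity of summable families of endomorphisms of $\B$ (here is where summability on all of $\B$, not just $\A$, is used). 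Writing $I=\sum_{n\ge 0}\frac{1}{n!}\pi_1^{*n}$ and noting $\Im(\pi_1^{*n})\subset\A_{[n]}\subset\A$ (the linear span of $n$-fold products of primitives), we conclude that for every $x\in\B$, $x=\sum_n\frac{1}{n!}\pi_1^{*n}(x)\in\A$. Thus $\B\subset\A$; the reverse inclusion is trivial, so $\B=\A$.

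\emph{$(3)\Rightarrow(1)$.} If $\B=\A$, then by the orthogonality of the projectors $\pi_{1,[\A]}^{*n}$ established in the proof of Proposition \ref{injectivity}, $\B=\bigoplus_{n\ge 0}\A_{[n]}$ with $\A_{[n]}$ the span of $n$-fold products of primitive elements. Set $\B_n:=\bigoplus_{k\le n}\A_{[k]}$. This is an increasing filtration with $\B_0=A.1_\B$; it is algebra-compatible because a product of a $p$-fold and a $q$-fold product of primitives is a $(p+q)$-fold such product, i.e. $\A_{[p]}\A_{[q]}\subset\A_{[p+q]}$; and it is coalgebra-compatible because, by \ref{subwords_of_primitive}, $\Delta(g_1\cdots g_k)=\sum_{I+J=[1..k]}L[I]\ot L[J]$ with $|I|+|J|=k$, so $\Delta(\A_{[k]})\subset\sum_{p+q=k}\A_{[p]}\ot\A_{[q]}$, hence $\Delta(\B_n)\subset\sum_{p+q=n}\B_p\ot\B_q$.

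\textbf{Main obstacle.} The delicate point is the identity $\exp_*\circ\log_*=\mathrm{Id}$ used in $(2)\Rightarrow(3)$: it is an identity of formal power series in $\Q[[X]]$, but to transport it to endomorphisms of $\B$ one needs $I^+$ to be universally substitutable on $\B$, which is exactly what summability of $((I^+)^{*n})_n$ buys us via \ref{domains}--\ref{subs} (with $V=W=\B$). On $\A$ this was free; the content of $(2)\Rightarrow(3)$ is that hypothesis (2) extends this to all of $\B$, forcing every element of $\B$ into the image of $\exp_*(\pi_1)$, which lands in $\A$. One must also be a little careful that the cocommutativity hypothesis is genuinely used (so that $\Delta$ is an algebra-and-coalgebra morphism, allowing Lemma \ref{lemma1} to apply to $\varphi=\Delta$), and that the standing assumption $\Q\subset A$ is what makes $\log_*$ and $\exp_*$ available at all.
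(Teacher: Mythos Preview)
Your proof follows the same cycle $(\mathrm{i})\Rightarrow(\mathrm{ii})\Rightarrow(\mathrm{iii})\Rightarrow(\mathrm{i})$ that the paper announces, and your $(2)\Rightarrow(3)$ is essentially identical to the paper's $(\mathrm{ii})\Rightarrow(\mathrm{iii})$: both invoke the substitution lemma (\ref{subs}) to get $\exp_*(\log_*(I))=I$ on all of $\B$, then use that $\pi_1=\log_*(I)$ lands in $\Prim(\B)$ to conclude $\B\subset\A$. The paper in fact leaves the other two implications blank, so your write-up of $(1)\Rightarrow(2)$ and $(3)\Rightarrow(1)$ goes beyond what is spelled out there; the arguments you give are the natural ones and are correct in spirit.

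One genuine slip in your $(3)\Rightarrow(1)$: the inclusion $\A_{[p]}\A_{[q]}\subset\A_{[p+q]}$ is \emph{false} in general. Recall from the proof of Proposition~\ref{injectivity} that $\A_{[n]}$ is the span of the $n$-th powers $P^n$ (equivalently, of symmetrized $n$-fold products), not of arbitrary $n$-fold products; for $x,y\in\Prim(\B)$ one has $xy=\tfrac12(xy+yx)+\tfrac12[x,y]\in\A_{[2]}\oplus\A_{[1]}$, not $\A_{[2]}$ alone. What \emph{is} true, and what you actually need, is that any $n$-fold product $g_1\cdots g_n$ of primitives lies in $\B_n=\bigoplus_{k\le n}\A_{[k]}$: indeed $\Delta_+^{(k-1)}(g_1\cdots g_n)=0$ for $k>n$ by (\ref{subwords_of_primitive}), so $\pi_1^{*k}(g_1\cdots g_n)=0$ for $k>n$, whence $g_1\cdots g_n=\sum_{k\le n}\tfrac{1}{k!}\pi_1^{*k}(g_1\cdots g_n)\in\B_n$. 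From this $\B_p\B_q\subset\B_{p+q}$ follows immediately, and your coalgebra-compatibility argument via (\ref{subwords_of_primitive}) goes through unchanged. With this correction the proof is complete.
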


\begin{proof} We prove 
\begin{equation}
\mathrm{(ii)}\Longrightarrow\mathrm{(iii)}\Longrightarrow\mathrm{(i)}\Longrightarrow\mathrm{(ii)}
\end{equation}
$\mathrm{(ii)}\Longrightarrow\mathrm{(iii)}$\pointir\\

The image of $j_\B$ it is the subalgebra generated by the primitive elements. Let us prove that, when   
$((\mathrm{Id}^+)^{*n})_{n\in \N}$ is summable, one has $\Im(j_\B)=\B$. The series $log(1+X)$ is without constant term so, in virtue of (\ref{subs}) and the summability of  $((\mathrm{Id}^+)^{*n})_{n\in \N}$, one has 
\begin{equation}
	exp(log(e+\mathrm{Id}^+))=exp(log(1+X))({Id}^+)=1_{\End(\B)}+{Id}^+=e+{Id}^+=I
\end{equation}
Set $\pi_1=log(e+\mathrm{Id}^+)$. 

To end this part, let us compute, for $x\in \B$
\begin{equation}
x=exp(\pi_1)(x)=(\sum_{n\geq 0} \frac{1}{n!} \pi_1^{*n})(x)=
(\sum_{n=0}^N \frac{1}{n!} \mu^{(n-1)}\pi_1^{\ot n})\Delta^{(n-1)}(x)
\end{equation}
where $N$ is the first order for which $\Delta^{+(n-1)}(x)=0$ (as $\pi_1\circ\mathrm{Id}^+=\pi_1$). This proves that $\B$ is generated by its 
primitive elements.\\ 
The implications $\mathrm{(iii)}\Longrightarrow\mathrm{(i)}$ and $\mathrm{(i)}\Longrightarrow\mathrm{(ii)}$ are obvious.\\
\end{proof}

\begin{remark} i) The equivalence $(i)\Longleftrightarrow (iii)$ is the classical CQMM theorem (see \cite{B_Lie_II_III}). The equivalence with (ii) could be called the ``Convolutional CQMM theorem''. The combinatorial aspects of this last one will be the subject of a forthcoming paper.\\
ii) When $\Prim(\B)$ is free, we have ${\cal B}\cong_{k-bialg}\mathcal{U}(Prim({\cal B}))$ and $\B$ is an enveloping algebra.\\
iii) The (counter) example is the following with $A=k[x]$ ($k$ is a field of characteristic zero). Let $Y$ be an alphabet and $\ncp{A}{Y}$ be the usual free algebra (the space of non-commutative polynomials over $Y$) and $\epsilon$, the ``constant term'' linear form. Let $conc$ be the concatenation and $\Delta$ the dual law of the shuffle product (cf supra). 

Then the bialgebra $(\ncp{A}{Y},conc,1_{Y^*},\Delta,\epsilon)$ is a Hopf algebra (it is the enveloping algebra of the Lie polynomials). Let 
$\ncp{A_+}{Y}=ker(\epsilon)$ and, for $N\geq 2$ $J_N=x^N.\ncp{A_+}{Y}$ then, $J_N$ is a Hopf ideal and $Prim(\ncp{A}{Y}/(J_N))$ is never free (no basis). 
\end{remark}

\section{Application to the $\phi$-deformed shuffle.}
\subsection{General results for the $\phi$-deformed shuffle.}

Let $Y=\{y_i\}_{i\in I}$ be still a totally ordered alphabet and $\AY$ be equipped 
with the $\phi$-deformed stuffle defined by \cite{orsay}
\begin{itemize}
\item[i)] for any $w\in Y^*$, $1_{Y^*}{\stuffle_\phi}w=w{\stuffle_\phi}1_{Y^*}=w$, 
\item[ii)] for any $y_i,y_j\in Y$ and $u,v\in Y^*$, 
\begin{eqnarray}\label{recursion}
y_iu{\stuffle_\phi}y_jv&=&y_j(y_iu{\stuffle_\phi}v)+y_i(u{\stuffle_\phi} y_jv)+\phi(y_i,y_j)u{\stuffle_\phi}v,
\end{eqnarray}
where $\phi$ is an arbitrary mapping
\begin{eqnarray*}
\phi: Y\times Y&\longrightarrow&AY\ .
\end{eqnarray*}
\end{itemize}
\begin{definition}
Let
\begin{eqnarray*}
\phi: Y\times Y&\longrightarrow&AY
\end{eqnarray*}
be defined by its structure constants
\begin{eqnarray*}\label{phi_struct_const}
(y_i,y_j)&\longmapsto&\phi(y_i,y_j)=\sum_{k\in I}\gamma_{i,j}^k\,y_k.
\end{eqnarray*}
\end{definition}
\begin{proposition}
The recursion (\ref{recursion}) defines a unique mapping
\begin{eqnarray*}
\stuffle_\phi\,: Y^*\times Y^*&\longrightarrow&\ncp{A}{Y}.
\end{eqnarray*}
\end{proposition}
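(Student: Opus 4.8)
The plan is to prove existence and uniqueness of $\stuffle_\phi$ by induction on the total length $|u|+|v|$ of the pair of words, following exactly the template that works for the shuffle product in \pref{shuff_rec}. The recursion \pref{recursion}, together with clause (i), prescribes the value of $u\stuffle_\phi v$ in terms of strictly shorter pairs, so the natural strategy is: first define the map on the set $\{(u,v)\in Y^*\times Y^*\mid |u|+|v|\le n\}$ by induction on $n$, then check that the definition is consistent and that the resulting map is the unique one satisfying (i) and (ii).

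First I would set up the base cases: for $|u|+|v|=0$ we must have $u=v=1_{Y^*}$ and clause (i) forces $1_{Y^*}\stuffle_\phi 1_{Y^*}=1_{Y^*}$. For $|u|+|v|=n+1$ with one of the words empty, clause (i) again gives the unique value ($w\stuffle_\phi 1_{Y^*}=1_{Y^*}\stuffle_\phi w=w$). For $|u|+|v|=n+1$ with both words nonempty, write $u=y_iu'$ and $v=y_jv'$; then the right-hand side of \pref{recursion} involves the three pairs $(y_iu',v')$, $(u',y_jv')$ and $(u',v')$, each of total length $\le n$, so by the inductive hypothesis $y_iu'\stuffle_\phi v'$, $u'\stuffle_\phi y_jv'$ and $u'\stuffle_\phi v'$ are already well-defined elements of $\ncp{A}{Y}$, and we \emph{define} $y_iu'\stuffle_\phi y_jv'$ to be $y_j(y_iu'\stuffle_\phi v')+y_i(u'\stuffle_\phi y_jv')+\phi(y_i,y_j)(u'\stuffle_\phi v')$. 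Here one uses that $\phi(y_i,y_j)\in AY$ and left concatenation by a letter (and, more generally, left multiplication by $\phi(y_i,y_j)$) are well-defined $A$-linear operations on $\ncp{A}{Y}$, so the right-hand side makes sense and the map extends bilinearly to $\ncp{A}{Y}\times\ncp{A}{Y}$. This simultaneously establishes existence: the map so constructed satisfies (i) by construction of the base/degenerate cases and (ii) by construction of the generic case.

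For uniqueness, suppose $\star$ is any other map $Y^*\times Y^*\to\ncp{A}{Y}$ satisfying (i) and (ii); I would show $u\star v=u\stuffle_\phi v$ for all $(u,v)$ by the same induction on $|u|+|v|$. The degenerate cases are forced by (i), and in the generic case (ii) expresses $y_iu'\star y_jv'$ in terms of the three strictly shorter pairs, which by the inductive hypothesis agree with the corresponding $\stuffle_\phi$-values, whence $y_iu'\star y_jv' = y_iu'\stuffle_\phi y_jv'$.

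I do not expect a real obstacle here — this is the standard "recursion on a well-founded set defines a unique function" argument, and unlike associativity or the Hopf-algebra axioms (which would require genuine combinatorial work) there is no compatibility condition to check, because for a fixed generic pair $(y_iu',y_jv')$ there is only \emph{one} way to decompose it as a head letter times a tail, so no overlap of cases arises. The only point deserving a word of care is that the well-founded order is the total length $|u|+|v|$ and that the three pairs on the right of \pref{recursion} all have strictly smaller total length (indeed length exactly $n$ when the left side has length $n+1$), together with the remark that the codomain $\ncp{A}{Y}$ is closed under the operations used (left concatenation by letters and left multiplication by the polynomial $\phi(y_i,y_j)$), so the induction never leaves $\ncp{A}{Y}$; then one extends by bilinearity to get a map defined on all of $\ncp{A}{Y}\otimes\ncp{A}{Y}$.
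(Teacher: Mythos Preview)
Your proposal is correct and follows essentially the same approach as the paper: both construct the map by induction on the total length $|u|+|v|$, handling the degenerate cases via clause (i) and the generic case via \pref{recursion}, and both derive uniqueness by the same induction. The only (harmless) slip is the parenthetical ``indeed length exactly $n$'': the pair $(u',v')$ has total length $n-1$, not $n$, but your weaker claim ``$\le n$'' is what the argument actually needs.
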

\begin{proof}
Let us denote $(Y^*\times Y^*)_{\leq n}$ the set of words $(u,v)\in Y^*\times Y^*$ such that $|u|+|v|\leq n$. We construct a sequence of mappings 
$$
{\stuffle_\phi}_{\leq n} : (Y^*\times Y^*)_{\leq n} \longrightarrow \ncp{A}{Y}.
$$
which satisfy the recursion of \mref{recursion}. For $n=0$, we have only a pre-image and 
${\stuffle_\phi}_{\leq 0}(1_{Y^*})=1_{Y^*}\otimes 1_{Y^*}$. Suppose ${\stuffle_\phi}_{\leq n}$ already constructed and let\\ 
$(u,v)\in (Y^*\times Y^*)_{\leq n+1}\setminus (Y^*\times Y^*)_{\leq n}$, i.e. $|u|+|v|=n+1$.\\ 
One has three cases : $u=1_{Y^*},\ v=1_{Y^*}$ and $\ (u,v)\in Y^+\times Y^+$. For the first two, one uses the initialisation of the recursion, thus 
$$
{\stuffle_\phi}_{\leq n+1}(w,1_{Y^*})={\stuffle_\phi}_{\leq n+1}(1_{Y^*},w)=w\ .
$$
For the last case, write $u=y_iu',\ v=y_jv'$ and use, to get 
$$
{\stuffle_\phi}_{\leq n+1}(y_iu',y_jv')=y_i{\stuffle_\phi}_{\leq n}(u',y_jv')+ y_j{\stuffle_\phi}_{\leq n}(y_iu',v')+y_{i+j}{\stuffle_\phi}_{\leq n}(u',v')
$$
this proves the existence of the sequence $({\stuffle_\phi}_{\leq n})_{n\geq 0}$. Every ${\stuffle_\phi}_{\leq n+1}$ extends the preceding so there is a mapping  
$$
{\stuffle_\phi} : Y^*\times Y^*\longrightarrow \ncp{A}{Y}.
$$
which extends all the ${\stuffle_\phi}_{\leq n+1}$ (the graph of which is the union of the graphs of the ${\stuffle_\phi}_{\leq n}$). This proves the existence.
 For unicity, just remark that, if there were two mappings ${\stuffle_\phi},{\stuffle'_\phi}$, the fact that they must fulfil the recursion (\ref{recursion}) implies that ${\stuffle_\phi}={\stuffle'_\phi}$.
\end{proof}
We still denote by $\phi$ and ${\stuffle_\phi}$ the linear extension of $\phi$ and ${\stuffle_\phi}$ to $AY\otimes AY$  
and $\AY\otimes\AY$ respectively.\\
Then  ${\stuffle_\phi}$ is a law of algebra (with $1_{Y^*}$ as unit) on $\AY$.
\begin{lemma}
 Let $\Delta$ be the morphism $\ncp{A}{Y}\ra \ncs{A}{Y^*\otimes Y^*}$ defined on the letters by 
\begin{equation}
	\Delta(y_s)=y_s\otimes 1+1\otimes y_s+\sum_{n,m\in I} \gamma_{n,m}^s\, y_n\otimes y_m\ .
\end{equation}
Then
\begin{itemize}
\item[i)] for all $w\in Y^+$ we have 
\begin{equation}
	\Delta(w)=w\otimes 1+1\otimes w+\sum_{u,v\in Y^+} \scal{\Delta(w)}{u\otimes v}\, u\otimes v  
\end{equation}
\item[ii)]for all $u,v,w\in Y^*$, one has
\begin{equation}
	\scal{u\stuffle_\phi v}{w}=\scal{u\otimes v}{\Delta(w)}^{\otimes\, 2}\ .
\end{equation}
\end{itemize}
\end{lemma}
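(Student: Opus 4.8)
The plan is to prove the two assertions separately, each by a short induction, starting from the single remark that $\ncp{A}{Y}$ is free associative on $Y$, so that the prescribed values on letters extend to a \emph{unique} algebra morphism $\Delta\colon(\ncp{A}{Y},\conc)\to\bigl(\ncs{A}{Y^*\otimes Y^*},\conc\otimes\conc\bigr)$ into the total algebra of the monoid $Y^*\otimes Y^*$ with componentwise concatenation (legitimate since $Y^*\otimes Y^*$ is of finite decomposition type); for a word $w$ one then has $\Delta(w)=\sum_{p,q}\scal{\Delta(w)}{p\otimes q}\,p\otimes q$. Part (i) is then a purely formal fact about $\Delta$, and part (ii) asserts that ${\stuffle_\phi}$ is the transpose of $\Delta$.

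For (i), I would first observe that the set $\mathcal{J}$ of elements of $\ncs{A}{Y^*\otimes Y^*}$ whose support lies in $Y^+\times Y^+$ is a two-sided ideal: multiplying some $p_1\otimes q_1$ with $p_1,q_1\in Y^+$ by any $p_2\otimes q_2$, on either side, produces $p_1p_2\otimes q_1q_2$ or $p_2p_1\otimes q_2q_1$, still with both components in $Y^+$. Writing $\Delta(y_s)=y_s\otimes1+1\otimes y_s+R_s$ with $R_s=\sum_{n,m\in I}\gamma_{n,m}^{s}\,y_n\otimes y_m\in\mathcal{J}$, I would expand $\Delta(w)=\prod_{i=1}^{k}\bigl(a_i\otimes1+1\otimes a_i+R_{a_i}\bigr)$ for $w=a_1\cdots a_k\in Y^+$ into its $3^{k}$ terms and sort them: choosing $a_i\otimes1$ in every factor gives $w\otimes1$; choosing $1\otimes a_i$ in every factor gives $1\otimes w$; any term using at least one $R_{a_i}$ lies in $\mathcal{J}$; and any remaining term, which takes $a_i\otimes1$ on a proper nonempty subset $S\subsetneq\{1,\dots,k\}$ and $1\otimes a_j$ off $S$, equals $w[S]\otimes w[S^c]$ with both subwords nonempty, hence again lies in $\mathcal{J}$. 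Since neither $(w,1_{Y^*})$ nor $(1_{Y^*},w)$ lies in $Y^+\times Y^+$, this yields $\scal{\Delta(w)}{w\otimes1}=\scal{\Delta(w)}{1\otimes w}=1$ and rules out any further term off $Y^+\times Y^+$, which is exactly (i). (The same accounting can instead be run as a one-line induction on $|w|$ via $\Delta(w)=\Delta(a_1)\,\Delta(a_2\cdots a_k)$.)

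For (ii), reading $\scal{u\otimes v}{\Delta(w)}^{\otimes 2}$ as the tensor square of the pairing, i.e.\ as $\scal{\Delta(w)}{u\otimes v}$, the assertion is $\scal{u{\stuffle_\phi}v}{w}=\scal{\Delta(w)}{u\otimes v}$ for all $u,v,w\in Y^*$, and I would prove it by induction on $|w|$, keeping the claim quantified over all $u,v$ at each stage. For $|w|=0$ both sides are $\delta_{u,1_{Y^*}}\delta_{v,1_{Y^*}}$, since by \pref{recursion} a product of two words under ${\stuffle_\phi}$ has a nonzero constant term only if both words are empty, while $\Delta(1_{Y^*})=1_{Y^*}\otimes1_{Y^*}$. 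For $w=y_sw'$ I would expand $\Delta(y_sw')=\Delta(y_s)\,\Delta(w')$ in the target algebra and pair against $u\otimes v$: the three summands of $\Delta(y_s)$ contribute, respectively, $\scal{\Delta(w')}{u''\otimes v}$ when $u=y_su''$, then $\scal{\Delta(w')}{u\otimes v''}$ when $v=y_sv''$, and $\gamma_{i,j}^{s}\,\scal{\Delta(w')}{u''\otimes v''}$ when $u=y_iu''$ and $v=y_jv''$ are both nonempty (all other cases giving $0$), and the induction hypothesis rewrites each $\scal{\Delta(w')}{a\otimes b}$ as $\scal{a{\stuffle_\phi}b}{w'}$. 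On the other side, $\scal{u{\stuffle_\phi}v}{y_sw'}$ is immediate when $u=1_{Y^*}$ or $v=1_{Y^*}$ (since $1_{Y^*}{\stuffle_\phi}v=v$), and when $u=y_iu''$, $v=y_jv''$ I would apply \pref{recursion}, substitute $\phi(y_i,y_j)=\sum_{k}\gamma_{i,j}^{k}\,y_k$, and pair with $y_sw'$: the three resulting summands $y_j(\cdots)$, $y_i(\cdots)$ and $\sum_{k}\gamma_{i,j}^{k}y_k(\cdots)$ survive exactly when $y_j=y_s$, $y_i=y_s$ and $y_k=y_s$ respectively, which reproduces term by term the three contributions obtained on the $\Delta$-side; matching them closes the induction. (As a check, putting $v=1_{Y^*}$ or $u=1_{Y^*}$ in the proved identity recovers the two distinguished coefficients of (i).)

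I do not expect a real obstacle here. The two things needing care are that on the $\Delta$-side it is the \emph{leading letter of $w$}, not that of $u$ or $v$, which must equal $y_s$ for a given summand of $\Delta(y_s)$ to contribute, and that the induction for (ii) must be organized on $|w|$ alone, proving the statement for all $(u,v)$ simultaneously, so that the hypothesis at length $|w|-1$ can be invoked for each of the strictly shorter pairs $(u'',v)$, $(u,v'')$ and $(u'',v'')$ that occur in the expansion. I would prove (i) before (ii), since it is what makes the degenerate $u=1_{Y^*}$ and $v=1_{Y^*}$ pairings in (ii) transparent.
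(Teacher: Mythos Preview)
Your proof is correct and follows essentially the same approach as the paper. For (ii), the paper packages the same computation slightly differently by setting $S(u,v):=\sum_{w}\scal{u\otimes v}{\Delta(w)}\,w$ and showing that $S$ satisfies the defining recursion of ${\stuffle_\phi}$ (hence equals it by uniqueness), rather than inducting on $|w|$ as you do; the underlying expansion of $\Delta(y_sw')=\Delta(y_s)\Delta(w')$ and the term-by-term matching against the three summands of \pref{recursion} are identical in both arguments.
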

\begin{proof}
\begin{itemize}
\item[i)] By recurrence on $|w|$. If $w=y_s$ is of length one, it is obvious from the definition. If $w=y_sw'$, we have, from the fact that $\Delta$ is a morphism 
\begin{eqnarray}
	\Delta(w)&=&\biggl(y_s\otimes 1+1\otimes w+\sum_{i,j\in I}\gamma_{i,j}^s y_i\otimes y_j\biggr)\cr
	&&\biggl(w'\otimes 1+1\otimes w'+\sum_{u,v\in Y^+}\scal{u\otimes v}{\Delta(w')}\biggr) 
\end{eqnarray}
the development of which proves that $\Delta(w)$ is of the desired form.\\ 
\item[ii)] Let $S(u,v):=\sum_{w\in Y^*}\scal{u\otimes v}{\Delta(w)}\, w$. It is easy to check (and left to the reader) that, for all $u\in Y^*$, $S(u,1)=S(1,u)=u$. Let us now prove that, for all $y_i,y_j\in Y$ and $u,v\in Y^*$
\begin{equation}
S(y_iu,y_jv)=y_iS(u,y_jv)+y_jS(y_iu,v)+\phi(y_i,y_j)S(u,v)\ .	
\end{equation}
Indeed, noticing that $\Delta(1)=1\otimes 1$, one has 
\begin{align*}
S(y_iu,y_jv)&=\sum_{w\in Y^*}\scal{y_iu\otimes y_jv}{\Delta(w)}w=
 						\sum_{w\in Y^+}\scal{y_iu\otimes y_jv}{\Delta(w)}w\cr
						&=
						\sum_{y_s\in Y,\ w'\in Y^*}\scal{y_iu\otimes y_jv}{\Delta(y_sw')}\, y_sw'\cr
						&=\sum_{y_s\in Y,\ w'\in Y^*}\scal{y_iu\otimes y_jv}
                                                               {\biggl(y_s\otimes 1+1\otimes y_s+\sum_{n,m\in I}\gamma_{n,m}^s\,y_n\otimes y_m\biggr)\Delta(w')}\, y_sw'\cr
						&= \sum_{y_s\in Y,\ w'\in Y^*}\scal{y_iu\otimes y_jv}
{(y_s\otimes 1)\Delta(w')}\, y_sw'\cr
						&+
\sum_{y_s\in Y,\ w'\in Y^*}\scal{y_iu\otimes y_jv}
{(1\otimes y_s)\Delta(w')}\, y_sw'\cr
						&+\sum_{y_s\in Y,\ w'\in Y^*}\scal{y_iu\otimes y_jv}
{(\sum_{n,m\in I}\gamma_{n,m}^s\,y_n\otimes y_m)\Delta(w')}\, y_sw'
\end{align*}
\begin{align*}
						&= \sum_{w'\in Y^*}\scal{u\otimes y_jv}{\Delta(w')}\, y_iw'+
						\sum_{w'\in Y^*}\scal{y_iu\otimes v}{\Delta(w')}\, y_jw'\cr
						&+ \sum_{y_s\in Y,\ w'\in Y^*}\scal{u\otimes v}
{\gamma_{i,j}^s\Delta(w')}\, y_sw'\cr
						&=y_i\sum_{w'\in Y^*}\scal{u\otimes y_jv}{\Delta(w')}\, w'+
						y_j\sum_{w'\in Y^*}\scal{y_iu\otimes v}{\Delta(w')} w'\cr
						&+ \sum_{y_s\in Y}\gamma_{i,j}^s\, y_s\sum_{w'\in Y^*}\scal{u\otimes v}
{\Delta(w')}\,w'\cr
						&=y_iS(u,y_jv)+	y_jS(y_iu,v)+ \phi(y_i,y_j)S(u,v)
\end{align*} 
then the computation of $S$ shows that, for all $u,v\in Y^*$, $S(u,v)=u\stuffle_\phi v$ as $S$ is bilinear, so $S=\stuffle_\phi$. 
\end{itemize}
\end{proof}
\begin{theorem}
\begin{itemize}
\item[i)] The law ${\stuffle_\phi}$ is commutative if and only if the extension
\begin{eqnarray*}
\phi : AY\otimes AY\longrightarrow{A}{Y}
\end{eqnarray*}
is so.
\item[ii)] The law ${\stuffle_\phi}$ is associative if and only if the extension
\begin{eqnarray*}
\phi : AY\otimes AY\longrightarrow{A}{Y}
\end{eqnarray*}
is so.
\item[iii)] Let $\gamma_{x,y}^z:=\langle{\phi(x,y)}|{z}\rangle$ be the structure constants of $\phi$ (w.r.t. the basis $Y$),
then ${\stuffle_\phi}$ is dualizable if and only if $(\gamma_{x,y}^z)_{x,y,z\in X}$ has the following decomposition property\footnote{One can prove that, in case $Y$ is a semigroup, the associated $\phi$ fulfils \mref{cond_D} iff $Y$ fulfils ``condition D'' of Bourbaki (see \cite{B_alg_I_III})}
\begin{eqnarray}\label{cond_D}
	(\forall z\in X)(\#\{(x,y)\in X^2|\gamma_{x,y}^z\not=0\}<+\infty)\ .
\end{eqnarray}
\end{itemize}
\end{theorem}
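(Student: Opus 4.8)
The plan is to prove the three equivalences by exploiting the "dual pairing" formula established in the preceding lemma, namely $\scal{u\stuffle_\phi v}{w}=\scal{u\otimes v}{\Delta(w)}^{\otimes 2}$, which reduces every statement about $\stuffle_\phi$ to a statement about the comultiplication-like map $\Delta$ on $\ncp{A}{Y}$. For part (i), I would argue that commutativity of $\stuffle_\phi$ is equivalent to the symmetry of $\Delta$ under the flip $\tau(a\otimes b)=b\otimes a$ on the letters; on the letters $\Delta(y_s)=y_s\otimes 1+1\otimes y_s+\sum_{n,m}\gamma_{n,m}^s y_n\otimes y_m$, the first two terms are already flip-symmetric, so flip-symmetry is equivalent to $\gamma_{n,m}^s=\gamma_{m,n}^s$ for all $n,m,s$, which is exactly the symmetry of $\phi$. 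Since $\Delta$ is a morphism, flip-symmetry on letters propagates to all of $\ncp{A}{Y}$, and by the pairing formula $\scal{u\stuffle_\phi v}{w}=\scal{v\stuffle_\phi u}{w}$ for all $w$ forces $u\stuffle_\phi v=v\stuffle_\phi u$. The converse direction is read off by testing the recursion on two letters $y_i\stuffle_\phi y_j$.

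For part (ii), the same dualization principle applies: associativity of $\stuffle_\phi$ is equivalent to coassociativity of $\Delta$, i.e. $(\Delta\otimes\mathrm{id})\circ\Delta=(\mathrm{id}\otimes\Delta)\circ\Delta$, via $\scal{(u\stuffle_\phi v)\stuffle_\phi w}{t}=\scal{u\otimes v\otimes w}{(\Delta\otimes\mathrm{id})\Delta(t)}^{\otimes 3}$ and the mirror identity for the other bracketing. Because $\Delta$ is an algebra morphism, coassociativity need only be checked on generators, i.e. on letters $y_s$; expanding both sides of $(\Delta\otimes\mathrm{id})\Delta(y_s)=(\mathrm{id}\otimes\Delta)\Delta(y_s)$ and matching the components living in $Y\otimes Y\otimes Y$ yields precisely the cocycle-type identity $\sum_{p}\gamma_{n,p}^s\gamma_{k,l}^p=\sum_{p}\gamma_{p,l}^s\gamma_{n,k}^p$ (indices arranged appropriately), which is exactly the associativity of the bilinear extension $\phi$ written in structure constants. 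For this to make sense I should first note that $\phi$ maps into $AY$ (degree-one part), so $\phi\otimes\mathrm{id}$ and $\mathrm{id}\otimes\phi$ land in $AY\otimes AY$ and the associativity identity $\phi(\phi(x,y),z)=\phi(x,\phi(y,z))$ is meaningful.

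For part (iii), "dualizable" should mean that $\stuffle_\phi$, as a product on $\ncp{A}{Y}$, has a well-defined transpose coproduct on $\ncp{A}{Y}$, equivalently that for each basis word $w\in Y^*$ the set of pairs $(u,v)$ with $\scal{u\stuffle_\phi v}{w}\ne 0$ is finite; by the pairing formula this is the finiteness of the support of $\Delta(w)$. I would show by the recursion (or by the explicit morphism formula for $\Delta(w)$, as in part (i) of the lemma) that $\supp(\Delta(w))$ is finite for all $w$ if and only if it is finite for all letters $y_s$, and $\supp(\Delta(y_s))$ is finite precisely when $\{(n,m):\gamma_{n,m}^s\ne 0\}$ is finite — this is condition \pref{cond_D}. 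One direction is immediate; for the other, one induces on $|w|$ using that $\Delta$ is multiplicative and that a product of two finitely supported series (in the total algebra $\ncs{A}{Y^*\otimes Y^*}$) is again finitely supported, together with the observation that in $\Delta(y_sw')$ only finitely many letters $y_s$ from the expansion of $\Delta(w)$ can contribute.

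The main obstacle I expect is part (iii): one must pin down the correct notion of "dualizable" (the struck-out "finite decomposition type" in the statement suggests the authors themselves revised it) and then carefully run the induction showing finiteness of $\supp\Delta(w)$ for all words reduces to finiteness on letters — the subtlety is that even if each $\Delta(y_s)$ has finite support, an infinite alphabet could a priori make $\Delta(w)$ spill over infinitely many letters, so one needs the per-letter condition \pref{cond_D} (finitely many $(n,m)$ for each fixed \emph{target} $z$) rather than a per-source condition, and the direction of the implication must be tracked with care. Parts (i) and (ii) are essentially routine once the dualization dictionary ($\stuffle_\phi\leftrightarrow\Delta$, commutative $\leftrightarrow$ cocommutative, associative $\leftrightarrow$ coassociative) is set up and one remembers that morphism properties let one check everything on generators.
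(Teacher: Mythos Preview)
Your approach is correct and matches the paper's: reduce everything via the pairing $\scal{u\stuffle_\phi v}{w}=\scal{u\otimes v}{\Delta(w)}$ to properties of the morphism $\Delta$, check (co)commutativity and (co)associativity on letters using that $\Delta$ is multiplicative, and obtain the converses by direct expansion on products of two or three letters. The only refinement the paper makes explicit is that in part (ii), since $\Delta$ a priori lands in the series space $\ncs{A}{Y^*\otimes Y^*}$ rather than in $\ncp{A}{Y}^{\otimes 2}$, the operators ``$\Delta\otimes\mathrm{id}$'' and ``$\mathrm{id}\otimes\Delta$'' must be defined on the full series space via the pairing (the paper writes $\overline{\Delta_{\ministuffle_\phi}\otimes I}$ and $\overline{I\otimes\Delta_{\ministuffle_\phi}}$) before the coassociativity argument goes through; your worry about part (iii) is unfounded, since a word has finitely many letters and a finite product of finitely-supported elements of the total algebra $\ncs{A}{Y^*\otimes Y^*}$ is again finitely supported.
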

\begin{proof}
(i) First, let us suppose that $\phi$ be commutative and consider $T$, the twist, i.e. the operator in $\ncs{A}{Y^*\otimes Y^*}$ defined by 
\begin{equation}
	\scal{T(S)}{u\otimes v}=\scal{S}{v\otimes u}\ .
\end{equation}
It is an easy check to prove that $T$ is a morphism of algebras. If $\phi$ is commutative, then so is the following diagram. 
\begin{center}
\begin{tikzpicture}
  \matrix (m) [matrix of math nodes,row sep=3em,column sep=8em,minimum width=2em]
  {Y & \ncs{A}{Y^*\otimes Y^*} \\
     & \ncs{A}{Y^*\otimes Y^*} \\
     };
  \path[-stealth]
    (m-1-1) edge [] node [above] {$\Delta_{\ministuffle_\phi}$} (m-1-2)
            edge [] node [below] {$\Delta_{\ministuffle_\phi}$} (m-2-2)
    (m-1-2) edge node [right] {$T$}  (m-2-2);
\end{tikzpicture}
\end{center}
and, then, the two morphisms $\Delta_{\ministuffle_\phi}$ and $T\circ\Delta_{\ministuffle_\phi}$ coincide on the generators $Y$ of the algebra $\ncp{A}{Y}$ and hence over $\ncp{A}{Y}$ itself. Now for all $u,v,w\in Y^*$, one has
\begin{eqnarray}
&&	\scal{v\ministuffle_\phi u}{w}=\scal{v\otimes u}{\Delta_{\ministuffle_\phi}(w)} =
	\scal{u\otimes v}{T\circ \Delta_{\ministuffle_\phi}(w)}\cr
&=&\scal{u\otimes v}{\Delta_{\ministuffle_\phi}(w)}=\scal{u\ministuffle_\phi v }{w}
\end{eqnarray}
which proves that $v\ministuffle_\phi u=u\ministuffle_\phi v$. Conversely, if $\ministuffle_\phi$ is commutative, one has, for $i,j\in I$ 
\begin{equation}
	\phi(y_j,y_i)=y_j\ministuffle_\phi y_i - (y_j\shuffle y_i)=y_i\ministuffle_\phi y_j - (y_i\shuffle y_j)=\phi(y_i,y_j)\ .
\end{equation}
(ii) Likewise, if $\phi$ is associative, let us define the operators 
\begin{equation}
\overline{\Delta_{\ministuffle_\phi}\otimes I} : 
\ncs{A}{Y^*\otimes Y^*} \ra \ncs{A}{Y^*\otimes Y^*\otimes Y^*}
\end{equation} 
by 
\begin{equation}
\scal{\overline{\Delta_{\ministuffle_\phi}\otimes I}(S)}{u\otimes v\otimes w}=
\scal{S}{(u\ministuffle_\phi v)\otimes w}
\end{equation} 
and, similarly, 
\begin{equation}
\overline{I\otimes \Delta_{\ministuffle_\phi}} : 
\ncs{A}{Y^*\otimes Y^*} \ra \ncs{A}{Y^*\otimes Y^*\otimes Y^*}
\end{equation} 
by 
\begin{equation}
\scal{\overline{I\otimes \Delta_{\ministuffle_\phi}}(S)}{u\otimes v\otimes w}=
\scal{S}{u\otimes (v\ministuffle_\phi  w)}
\end{equation} 
it is easy to check by direct calculation that they are well defined morphisms and that the following diagram   
\begin{center}
\begin{tikzpicture}
  \matrix (m) [matrix of math nodes,row sep=3em,column sep=8em,minimum width=2em]
  {
     Y & \ncs{A}{Y^*\otimes Y^*} \\
     \ncs{A}{Y^*\otimes Y^*} & \ncs{A}{Y^*\otimes Y^*\otimes Y^*} \\};
  \path[-stealth]
    (m-1-1) edge node [left] {$\Delta_{\ministuffle_\phi}$} (m-2-1)
            edge [] node [above] {$\Delta_{\ministuffle_\phi}$} (m-1-2)
    (m-2-1.east|-m-2-2) edge node [below] {$\overline{\Delta_{\ministuffle_\phi}\otimes I}$}
            node [above] {} (m-2-2)
    (m-1-2) edge node [right] {$\overline{I\otimes \Delta_{\ministuffle_\phi}}$} (m-2-2);
\end{tikzpicture}
\end{center}
is commutative. This proves that the two composite morphisms 
$$
\overline{\Delta_{\ministuffle_\phi}\otimes I}\circ \Delta_{\ministuffle_\phi}
$$ 
and 
$$
\overline{I\otimes \Delta_{\ministuffle_\phi}}\circ \Delta_{\ministuffle_\phi}
$$
coincide on $Y$ and then on $\ncp{A}{Y}$. Now, for $u,v,w,t\in Y^*$, one has 
\begin{eqnarray*}
&&\scal{(u\ministuffle_\phi v)\ministuffle_\phi w}{t}=\scal{(u\ministuffle_\phi v)\otimes w}{\Delta_{\ministuffle_\phi}(t)}\cr
&=&\scal{u\otimes v\otimes w}{(\overline{\Delta_{\ministuffle_\phi}\otimes I})\Delta_{\ministuffle_\phi}(t)}\cr
&=& \scal{u\otimes v\otimes w}{(\overline{I\otimes \Delta_{\ministuffle_\phi}})\Delta_{\ministuffle_\phi}(t)}\cr
&=&\scal{u\otimes  (v\ministuffle_\phi w)}{\Delta_{\ministuffle_\phi}(t)}=
\scal{u\ministuffle_\phi (v\ministuffle_\phi w)}{t}
\end{eqnarray*}
which proves the associatvity of the law $\ministuffle_\phi$. Conversely, if $\ministuffle_\phi$ is associative, the direct expansion of the right hand side of 
\begin{equation}
0=(y_i\ministuffle_\phi y_j)\ministuffle_\phi y_k-
y_i\ministuffle_\phi (y_j\ministuffle_\phi y_k)
\end{equation}
proves the associativity of $\phi$.\\
iii) We suppose that $(\gamma_{x,y}^z)_{x,y,z\in X}$ satisfies \mref{cond_D}. In this case $\Delta_{\ministuffle_\phi}$ takes its values in $\ncp{A}{Y}\otimes \ncp{A}{Y}$ so its dual, the law $\ministuffle_\phi$ is dualizable. Conversely, if $Im(\Delta_{\ministuffle_\phi})\subset \ncp{A}{Y}\otimes \ncp{A}{Y}$, one has, for every 
$s\in I$
$$
\sum_{n,m\in I} \gamma_{n,m}^s\, y_n\otimes y_m=\Delta(y_s)-(y_s\otimes 1+1\otimes y_s)
\in \ncp{A}{Y}\otimes \ncp{A}{Y}
$$ 
which proves the claim.  
\end{proof}
{\bf From now on, we suppose that $\phi : AY\otimes AY\longrightarrow AY$
is an associative and commutative law (of algebra) on $AY$.}
\begin{theorem}\label{phishuffstruct}
Let $A$ be a $\Q$-algebra. Then if $\phi$ is dualizable
\footnote{For the pairing defined by 
$$
(\forall x,y\in Y)(\scal{x}{y}=\delta_{x,y})\ .
$$
}, let $\Delta_{\ministuffle_\phi} : \ncp{A}{Y}\longrightarrow \ncp{A}{Y}\otimes \ncp{A}{Y}$ denote its dual comultiplication, then
\begin{itemize}
\item[a)] $\mathcal{B}_\phi=(\AY, {\tt conc}, 1_{Y^*}, \Delta_{\ministuffle_\phi},\varepsilon) $ is a bialgebra.\\
\item[b)] If $A$ is a $\Q$-algebra then, the following conditions are equivalent\\
i) $\mathcal{B}_\phi$ is an enveloping bialgebra\\
ii) the algebra $AY$ admits an increasing  filtration $\Big((AY)_n\Big)_{n\in \N}$
\begin{eqnarray*}
	(AY)_0=\{0\}\subset (AY)_1\subset\cdots\subset (AY)_n\subset (AY)_{n+1}\subset\cdots
\end{eqnarray*}
compatible with both the multiplication and the comultiplication $\Delta_{\ministuffle_\phi}$ {\it i.e.}
\begin{eqnarray*}
(AY)_p(AY)_q &\subset& (AY)_{p+q}\cr
	\Delta_{\ministuffle_\phi}((AY)_n)&\subset& \sum_{p+q=n} (AY)_p\otimes (AY)_q\ .
\end{eqnarray*}    
iii) $\B_\phi$ is isomorphic to $(\ncp{A}{Y},\mathtt{conc},1_{Y^*},\Delta_{\shuffle},\ep)$ as a bialgebra.\\
iv) $I^+$ is $\star$-nilpotent.
\end{itemize}
\end{theorem}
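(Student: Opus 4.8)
The plan is to derive part (a) directly from the Lemma preceding the statement, and then to obtain part (b) by specializing the analytic CQMM theorem of Section \ref{CCQMM} to the bialgebra $\mathcal{B}_\phi$, with an extra ingredient linking the concrete structure of $\phi$ to the candidate filtration. For (a), the Lemma already establishes that $\Delta = \Delta_{\ministuffle_\phi}$, defined on letters by $\Delta(y_s) = y_s\otimes 1 + 1\otimes y_s + \sum_{n,m} \gamma_{n,m}^s\, y_n\otimes y_m$ and extended multiplicatively, is a well-defined algebra morphism $\AY \to \AY\otimes\AY$ (dualizability, i.e.\ condition \pref{cond_D}, is exactly what guarantees the image lands in the polynomial tensor square, as shown in part (iii) of the previous theorem). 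So I only need to check coassociativity and counitality of $\Delta_{\ministuffle_\phi}$. Both follow by duality from the corresponding properties of $\ministuffle_\phi$ already proved: associativity and unitality of $\ministuffle_\phi$ (the latter being the hypothesis plus the base case of the recursion) dualize to coassociativity and counitality of $\Delta_{\ministuffle_\phi}$, using the pairing $\scal{u\ministuffle_\phi v}{w} = \scal{u\otimes v}{\Delta(w)}^{\otimes 2}$ of the Lemma. Compatibility of $\Delta_{\ministuffle_\phi}$ with concatenation is precisely the statement that $\Delta_{\ministuffle_\phi}$ is an algebra morphism. Hence $\mathcal{B}_\phi$ is a bialgebra, and since it is connected graded (by word length, using that $\phi$ preserves length: $\phi(Y\otimes Y)\subset AY$) it is automatically a Hopf algebra. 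Note $\mathcal{B}_\phi$ is cocommutative exactly because $\phi$ is commutative, by part (i) of the previous theorem.

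For part (b), the strategy is: first observe that $\mathcal{B}_\phi$ is a cocommutative $A$-bialgebra with $A$ a $\Q$-algebra, so the analytic CQMM theorem applies verbatim. One must, however, first be in the situation where $j_{\mathcal{B}_\phi}$ is into, i.e.\ Proposition \ref{injectivity} holds; here $\Prim(\mathcal{B}_\phi)$ contains the length-graded pieces and the generating-set/section argument of Proposition \ref{injectivity} goes through (the relevant $s_G$ for a homogeneous generating set has the required kernel inclusion, essentially because $\Delta_{\ministuffle_\phi}$ is length-graded). Granting injectivity, the analytic CQMM theorem gives the equivalence of: the existence of a filtration compatible with product and coproduct (this is condition (ii) of the theorem statement, matching condition (1) of the cited theorem), the $\star$-summability of $((\mathrm{Id}^+)^{*n})_{n}$ (condition (iv), ``$I^+$ is $\star$-nilpotent'', matching condition (2) — here ``$\star$-nilpotent'' should be read as ``$\star$-summable'' in the sense of \pref{summability_criterium}, equivalently locally nilpotent), and $\mathcal{B}_\phi = \A$, i.e.\ $\mathcal{B}_\phi$ is generated by its primitives (matching condition (3)). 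Then ``$\mathcal{B}_\phi$ is an enveloping bialgebra'' (condition (i)) follows: once $\mathcal{B}_\phi = \A$ and $j_{\mathcal{B}_\phi}$ is into, $j_{\mathcal{B}_\phi} : \U(\Prim(\mathcal{B}_\phi)) \to \mathcal{B}_\phi$ is a bialgebra isomorphism by the section construction $\tau\circ\sigma$ of Proposition \ref{injectivity}.

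It remains to fold in condition (iii), isomorphism with the standard shuffle bialgebra $(\ncp{A}{Y},\mathtt{conc},1_{Y^*},\Delta_\shuffle,\ep)$. The implication (iii) $\Rightarrow$ (i) is immediate since the shuffle bialgebra \emph{is} the enveloping algebra $\U(\LQY)$ (more precisely $\U(\Lie_A\langle Y\rangle)$) by the classical CQMM theorem, and the isomorphism transports this. For (i) $\Rightarrow$ (iii): if $\mathcal{B}_\phi \cong \U(\Prim(\mathcal{B}_\phi))$, I would argue that $\Prim(\mathcal{B}_\phi)$ is a free Lie algebra on a set in bijection with $Y$, obtained from the length grading — the degree-one part is spanned by the $y_i$, and a dimension/rank count through each graded component, comparing with the free Lie algebra, forces freeness; then the universal property of $\U$ of a free Lie algebra identifies $\mathcal{B}_\phi$ with $\ncp{A}{Y}$ carrying the unshuffling coproduct, since a coalgebra morphism is determined by its values on the primitive generators and these match $\Delta_\shuffle$ on letters after the identification. \textbf{The main obstacle} I anticipate is precisely this last point in the presence of a general $\Q$-algebra $A$ with possible torsion: establishing that $\Prim(\mathcal{B}_\phi)$ is \emph{free} as a Lie algebra (not merely that $\mathcal{B}_\phi$ is generated by primitives) cannot rely on a PBW basis, and one must instead run the graded argument carefully using only the length filtration and the projector $\pi_{1,[\A]}$ from Section \ref{CCQMM} — this is the delicate ``CQMM without PBW'' input the paper flagged in its introduction. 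The remaining implications among (ii), (iii), (iv) are then routine transport of structure along the isomorphism.
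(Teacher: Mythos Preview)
Your plan contains a real error and misses the paper's key construction for the hard implication.

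\textbf{The error.} You assert that $\mathcal{B}_\phi$ is connected graded by word length and hence automatically a Hopf algebra. This is false: the coproduct $\Delta_{\ministuffle_\phi}(y_s)=y_s\otimes 1+1\otimes y_s+\sum_{n,m}\gamma_{n,m}^s\,y_n\otimes y_m$ sends a length-$1$ element to something whose components have total length up to $2$, so word length is neither a bialgebra grading nor a coalgebra filtration for $\mathcal{B}_\phi$. The Remark right after the theorem exhibits a $\phi$ (the group law of $\Z/2\Z$ on $Y=\{y_0,y_1\}$) for which $1+y_0+y_1$ is group-like with no inverse, so $\mathcal{B}_\phi$ is not even a Hopf algebra. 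Your later uses of ``length-graded'' --- to argue that $j_{\mathcal{B}_\phi}$ is into via Proposition~\ref{injectivity}, and to run the rank count in (i)$\Rightarrow$(iii) --- inherit this defect and collapse.

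\textbf{The missed idea.} For the crucial direction (iv)$\Rightarrow$(iii), the paper does not try to prove abstractly that $\Prim(\mathcal{B}_\phi)$ is a free Lie algebra (the obstacle you rightly flag). It constructs the isomorphism explicitly. Assuming $I^+$ is $\star$-nilpotent, set $y'_s:=\pi_1(y_s)=\log_*(I)(y_s)$, which is a genuine polynomial. Since $(I^+)^{*k}(y_s)=\mu^{(k-1)}(I^+)^{\otimes k}\Delta^{(k-1)}(y_s)$ lies in the span of words of length exactly $k$, one has $y'_s=y_s+(\text{terms of length}\geq 2)$. Hence the algebra endomorphism $\Phi$ of $\ncp{A}{Y}$ determined by $\Phi(y_s)=y'_s$ is unitriangular with respect to word length; the identity $I=\exp_*(\pi_1)$ yields its inverse explicitly (this is the formula $y_s=\sum_{k\geq 1}\frac{1}{k!}\,\pi_1^{*k}(y_s)$ in the paper). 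Each $y'_s$ is primitive for $\Delta_{\ministuffle_\phi}$ by construction of $\pi_1$, so on letters $\Delta_{\ministuffle_\phi}\circ\Phi(y_s)=y'_s\otimes 1+1\otimes y'_s=(\Phi\otimes\Phi)\circ\Delta_{\shuffle}(y_s)$; since both sides are algebra morphisms this extends to all of $\ncp{A}{Y}$, and $\Phi$ is the required bialgebra isomorphism $(\ncp{A}{Y},\conc,\Delta_{\shuffle})\to\mathcal{B}_\phi$. This route never touches the structure of $\Prim(\mathcal{B}_\phi)$ as a Lie algebra and needs no separate injectivity argument for $j_{\mathcal{B}_\phi}$. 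The remaining implications (iii)$\Rightarrow$(i)$\Rightarrow$(ii)$\Rightarrow$(iv) are the ``easier'' ones the paper leaves unwritten, and your sketch for those (transport along $\Phi$, then the analytic CQMM theorem) is adequate.
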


\begin{proof} We only prove the following implication (the other ones are easy)\\ 
$iv)\Longrightarrow iii)$ Let us set $y'_s=\pi_1(y_s)$, then using a rearrangement of the star-log of the diagonal series, we have 
\begin{equation}
y_s=\sum_{k\ge1}\frac1{k!}\sum_{s'_1+\cdots +s'_k=s}\pi_1(y_{s'_1})\ldots\pi_1(y_{s'_k})
\end{equation}
This proves that the multiplicative morphism given by $\Phi(y_s)=y'_s$ is an isomorphism. But this morphism is such that $\Delta_{\ministuffle_\phi} \circ \Phi=(\Phi\ot \Phi)\circ \Delta_{\shuffle}$ which proves the claim.\\
\end{proof}
\begin{remark}
i) Theorem \ref{phishuffstruct} a) holds for general (dualizable, coassociative) $\phi$ be it commutative of not.\\ 
ii) It can happen that there is no antipode (and then, $I^+$ cannot be $\star$-nilpotent) as the following example shows.\\ 
Let $Y=\{y_0,y_1\}$ and $\phi(y_i,y_j)=y_{(i+j\ mod\ 2)}$, then
\begin{eqnarray}\label{exzee2}
&&\Delta(y_0)=y_0\ot 1+1\ot y_0+ y_0\ot y_0+ y_1\ot y_1\cr
&&\Delta(y_1)=y_1\ot 1+1\ot y_1+ y_0\ot y_1+ y_1\ot y_0
\end{eqnarray}
then, from eqns \ref{exzee2}, one derives that $1+y_0+y_1$ is group-like. As this element has no inverse in $\ncp{K}{Y}$. Thus, the bialgebra $\mathcal{B}_\phi$ cannot be a Hopf algebra. 
\\
iii) When $I^+$ is nilpotent, the antipode exists and is computed by 
\begin{equation}\label{antipodeI+}
	a_{{\stuffle_\phi}}=(I)^{*-1}=(e+I^+)^{*-1}=\sum_{n\geq 0}(-1)^k(I^+)^{*k}
\end{equation}
(see section (\ref{CCQMM})).\\
iv) In QFT, the antipode of a vector $h\in \B$ is computed by
\begin{equation}
	S(1)=1,\ S(h)=-h+\sum_{(1)(2)}S(h_{(1)})h_{(2)} 
\end{equation}
and by using the fact that $S$ is an antimorphism. This formula is used in contexts where $I^+$ is $\star$-nilpotent (although the concerned bialgebras are often not cocommutative).  
Here, one can prove this recursion from \mref{antipodeI+}.  
\end{remark}
\section{Conclusion}
We have depicted the framework which is common to different kinds of shuffles. For all these, provided that $I_+$ be $*$-nilpotent, the bialgebra 
$$
(\AY, {\tt conc}, 1_{Y^*}, \Delta_{\ministuffle_\phi},\varepsilon)
$$
is isomorphic to 
$$
(\AY, {\tt conc}, 1_{Y^*}, \Delta_{\minishuffle},\varepsilon)
$$
and the straightening algorithm is simply the morphism which sends each $y_s\in Y$ to $\pi_1(y_s)=\log(I)(y_s)$ (this bialgebra is then a Hopf algebra). In other cases, such as the infiltration given by 
$$
\Delta(y_s)=y_s\ot 1+1\ot y_s+y_s\ot y_s
$$ 
group-like elements without inverse may appear (and therefore no Hopf structure can be hoped).\\

\end{document}